\DeclareMathOperator*{\dm}{\mathrm dim}
\DeclareMathOperator*{\vr}{\mathrm Var}
\DeclareMathOperator*{\oprR}{\mathit R}
\DeclareMathOperator*{\oprH}{\mathit H}
\DeclareMathOperator*{\oprT}{\mathit T}
\newcommand{\ass}[3]{\left(#1,#2,#3\right)}
\newcommand{\dva}[2]{\left(#1#2\right)}
\newcommand{\com}[2]{\left[#1,#2\right]}
\newcommand{\rl}[1]{\mathrm{RA}^{\left<#1\right>}_{2}}
\newcommand{\slch}[2]{\genfrac{[}{]}{}{}{#1}{#2}}
\newcommand{\jord}[2]{#1\circ #2}
\newcommand{\tri}[3]{\bigl(\left(#1#2\right)#3\bigr)}
\newcommand{\jos}[2]{\left(#1\circ #2\right)}
\newcommand{\md}[1]{\left|#1\right|}
\newcommand{\Gob}[1]{{\mathrm G}\left(#1\right)}
\newcommand{\svob}[1]{F_{\scriptstyle #1}\left[X\right]}
\newlength{\fchr}
\newcommand{\factor}[2]{
\lefteqn{\phantom{#1}\hspace{-0.5\fchr}\diagup}
\genfrac{}{}{0pt}{0}{{#1}\phantom{#2}} {\phantom{#1}#2}}
\begin{document}

\markboth{Alexey Kuz'min}
{On the topological rank of the variety of right alternative metabelian Lie-nilpotent algebras}

%
\catchline{}{}{}{}{}
%

\title{ON THE TOPOLOGICAL RANK OF THE VARIETY OF RIGHT ALTERNATIVE METABELIAN LIE-NILPOTENT ALGEBRAS}

\author{\footnotesize ALEXEY KUZ'MIN\footnote{The author is supported by the S\~ao Paulo Research Foundation (FAPESP) 2010/51880--2.}}

\address{Institute of Math. and Stat.,
University of S\~ao Paulo\\
Rua do Matao, 1010 -- Cidade Universitaria\\
S\~ao Paulo, S\~ao Paulo, 05508-090, Brazil\\
\email{amkuzmin@ya.ru}}

\maketitle

\begin{history}
\received{(Day Month Year)}
\revised{(Day Month Year)}
\accepted{(Day Month Year)}
\comby{(xxxxxxxxx)}
\end{history}

\begin{abstract}
In 1981,
S.~V.~Pchelintsev  introduced the notion of topological rank for Spechtian varieties of algebras
as a certain tool for studying the structure of non-nilpotent subvarieties in a given variety.
We provide a variety of right alternative algebras of arbitrary given finite topological rank.
Namely, we prove that the topological rank of the variety of right alternative metabelian (solvable of index two)
algebras that are Lie-nilpotent of step not more than~$s$
over a field of characteristic distinct from two and three
is equal to~$s$.
\end{abstract}

\keywords{right alternative algebra, metabelian algebra,  Lie-nilpotent algebra, superalgebra,
variety of algebras, free algebra of variety, polynomial identity, Spechtian variety, Specht property of variety,
topological rank of variety.}

\ccode{Mathematics Subject Classification 2010: 17D15, 17A50, 17A70.}

\section{Introduction}
In 1986, A.~R.~Kemer~\cite{Kemer87,Kemer88} solved affirmatively the famous Specht
problem~\cite{Specht50} by proving that an arbitrary variety
of associative algebras over a field of characteristic zero is finitely based.
It is also known~\cite{Grishin99}--\cite{Schigolev00} that
there are non-finitely based varieties of associative algebras
over an arbitrary field of prime characteristic.

Recall that a variety of algebras is said to be
\textit{Spechtian}
(or to have the \textit{Specht property}) if its every subvariety is finitely based.
The Kemer's theorem has certain analogs in the cases of Jordan, alternative, and Lie algebras
over a field of characteristic~$0$.
Namely, A.~Ya.~Vais, E.~I.~Zel'manov~\cite{Vais-Zelmanov89}
proved the Specht property of
the variety generated by Jordan
$\mathrm{PI}\text{-algebra}$
on a finite set of generators.
A.~V.~Iltyakov~\cite{Iltyakov91} obtained the similar result for alternative
$\mathrm{PI}\text{-algebras}$ and also proved in~\cite{Iltyakov92} that the variety generated by
a finite dimensional Lie algebra is Spechtian.
U.~U.~Umirbaev~\cite{Umirbaev85}
proved the Specht property of every variety of solvable alternative algebras
over a field of characteristic distinct from two and three.
The questions about the Specht property for the varieties of all alternative, Lie, and Jordan algebras
over a field of characteristic zero are still open problems.

Since 1976, it is known~\cite{Belkin76} that the variety
of all right alternative metabelian algebras over an arbitrary field is not Spechtian.
In 1985,
I.~M.~Isaev~\cite{Isaev86} proved that non-finitely based varieties of
right alternative metabelian algebras can even be generated by finite-dimensional algebras.
The Specht property for certain varieties of right alternative algebras
were also studied in~\cite{Medvedev78}--\cite{Pchelintsev07}.

Recall~\cite{Pchelintsev81} the notion of topological rank for Spechtian varieties of algebras.
Let~$\mathcal V$ be a Spechtian variety of algebras and
$\mathfrak M$
be a proper subvariety
of
$\mathcal V$.
By a
\textit{system distinguishing $\mathfrak M$ from $\mathcal V$}
we mean a set
$S=\left\{f_1,\ldots,\,f_n\right\}$
of nonzero homogeneous polynomials of the free
$\mathcal V$-algebra
such that
$\mathfrak M$
can be defined by the union of the identities
$f_1=0,\ldots,\,f_n=0$
with the defining identities of $\mathcal V$.
A
\textit{degree of the system $S$}
is the maximal degree of its polynomials.
A
\textit{dimension
$\dm_{\mathcal V}\mathfrak M$
of~$\mathfrak M$
with respect to
$\mathcal V$}
is the minimal possible degree of a system distinguishing
$\mathfrak M$ from~$\mathcal V$.
Let
$\wp(\mathcal V)$
be the
\textit{set
of all subvarieties of~$\mathcal V$}.
For every
$\mathfrak R\in\wp(\mathcal V)$
and
$n\in\mathbb{N}$
by
$\textstyle\stackrel{\circ}{\mathrm U}_n(\mathfrak R)$
we denote the
\textit{set of all proper subvarieties
$\mathfrak M\subset\mathfrak R$
of dimensions
$\dm_{\mathfrak R}\mathfrak M\geqslant n$}
and put
$\mathrm{U}_n (\mathfrak R)
=
\stackrel{\circ}{\mathrm U}_n(\mathfrak R)\cup\left\{\mathfrak R\right\}$.
By definition, it is clear that
$$
\mathrm{U}_n (\mathfrak R)\cap
\mathrm{U}_{n'} (\mathfrak R)=
\mathrm{U}_{\max{(n,n')}} (\mathfrak R)
$$
and, for every
$
\mathfrak M\in\textstyle\stackrel{\circ}{\mathrm U}_n(\mathfrak R)
$,
we have
$
\mathrm{U}_{n} (\mathfrak M)\subset
\mathrm{U}_{n} (\mathfrak R)
$.
Therefore considering a set
$$
\mathfrak B =\left\{\mathrm{U}_n (\mathfrak R)\mid
\mathfrak R\in\wp(\mathcal V),\;n\in\mathbb{N}\right\}
$$
as a base for the neighborhoods, we endow
$\wp(\mathcal V)$
with a topology.
Every subset
$\wp(\mathcal V)$
gains a structure of the topological space with respect to the induced topology.
For every
$\Omega\subseteq\wp(\mathcal V)$
by
$\Omega'$
denote a
\textit{subspace of
$\Omega$
obtained by the exclusion of all its isolated points.}
Note that by virtue of the Specht property of
$\mathcal V$,
every descending chain of varieties in
$\wp(\mathcal V)$
stabilizes and, consequently, every topological subspace of
$\wp(\mathcal V)$
contains isolated points.
Therefore one can consider the strictly descending chain
$$
\Omega\supset\Omega'\supset\Omega''\supset\cdots\supset\Omega^{(n)}\supset\cdots.
$$
The
\textit{topological rank $\mathrm{r_t}(\Omega)$ of the space $\Omega$}
is the minimal $n$ such that
$\Omega^{(n)}=\varnothing$ or
$\aleph_0$ if such an $n$ doesn't exist.
The value
$\mathrm{r_t}\bigl(\wp(\mathcal V)\bigr)$
is called the
\textit{topological rank of the variety $\mathcal V$}
and is denoted shortly by
$\mathrm{r_t}(\mathcal V)$.
For example, if
$\mathcal V$ is nilpotent, then every subvariety of
$\mathcal V$
turns out to be an isolated point of
$\Omega=\wp(\mathcal V)$ and, consequently,
$\mathrm{r_t}(\mathcal V)=1$.
Otherwise,
$\Omega'$
consists of all non-nilpotent subvarieties of~$\mathcal V$
and its isolated points are the varieties that were limit points in
$\Omega$
for the sequences of nilpotent varieties only.
Further, if
$\Omega''$
is not empty, then its isolated points are the varieties that were limit points in
$\Omega'$
for the sequences containing only isolated points of $\Omega'$, etc.

The structures of a set of non-nilpotent subvarieties for various varieties of  nearly associative metabelian algebras
were studied in~\cite{Pchelintsev81}--\cite{Platonova04}.
A.~V.~Badeev~\cite{Badeev02} provided a chain
$\mathcal V_1\subset\cdots\subset\mathcal V_n\subset\cdots\subset\mathcal V$
of varieties of commutative alternative nil-algebras over a field of characteristic three
such that
$\mathrm{r_t}\left(\mathcal V_n\right)$ is a linear function on~$n$
and
$\mathrm{r_t}\left(\mathcal V\right)=\aleph_0$.
In 2007, S.~V.~Pchelintsev~\cite{Pchelintsev07} constructed a variety $\mathfrak M$ of right alternative metabelian algebras
of almost finite topological rank,
i.~e. a variety $\mathfrak M$ such that
$\mathrm{r_t}\left(\mathfrak M\right)=\aleph_0$ and
$\mathrm{r_t}\left(\mathfrak M'\right)$ is finite
for every proper subvariety
$\mathfrak M'\subset\mathfrak M$.

\subsection*{Formulation of the result}%
Let
$F$ be a \textit{field of characteristic
$\mathrm{char}(F)\neq 2,3$}
and
$\mathrm{RA}_2$
be the
\textit{variety of right alternative metabelian algebras over $F$}
defined by the identities
\begin{align}
\ass{x}{y}{z}+\ass{x}{z}{y}&=0\quad(\textit{the right alternative identity})\label{linprav},\\
\dva{x}{y}\dva{z}{t}&=0\quad(\textit{the metabelian identity})\label{metab},
\end{align}
where
$\ass{x}{y}{z}=\dva{x}{y}z-x\dva{y}{z}$
is the associator of the variables $x,y,z$.
By
$\rl{s}$ we denote the
\textit{subvariety of
$\mathrm{RA}_2$
distinguished by the identity}
\begin{equation}\label{eq Lie-nilp}
\Bigl[\bigl[\ldots\com{x_1}{x_2},\ldots,x_{s}\bigr],x_{s+1}\Bigr]=0
\end{equation}
\textit{of Lie-nilpotency of step~$s$},
where
$\left[x,y\right]=xy-yx$
is the commutator of~$x,y$.

The Specht property of $\rl{s}$
is proved by the author in~\cite{Kuz'min06}.
By virtue of nilpotency of every commutative subvariety of
$\mathrm{RA}_2$, we have
$\mathrm{r_t}\bigl(\rl{1}\bigr)=1$.
S.~V.~Pchelintsev established in~\cite{Pchelintsev81} that
$\mathrm{r_t}\bigl(\rl{2}\bigr)=2$.
In the present paper, we prove the following

\begin{trivlist}
\item[\textbf{ Theorem.}]
\textit{The topological rank of the variety~$\rl{s}$ is equal to $s$ for all
natural~$s$.}
\end{trivlist}

The paper is organized as follows.
In Sec.~\ref{Sec:BasicOperatorRelations}, we provide some preliminary results about the free
$\mathrm{RA}_2\text{-algebra}$
$\svob{\mathrm{RA}_2}$
on a countable set
$X$
of generators over
$F$.
Sec.~\ref{Sec:RelationsOfTheFreeAlgebra}
is devoted to the studying of relations of the free algebra
$\svob{\rl{s}}$.
In Sec.~\ref{Sec:UpperBoundForTheTopologicalRank},
we construct a system of linear generators for the space of multilinear polynomials in
$\svob{\rl{s}}$
of sufficiently high degree
and obtain the upper bound
$\mathrm{r_t}\bigl(\rl{s}\bigr)\leqslant s$
by estimating the values of topological ranks of some subvarieties in
$\rl{s}$
of special type.
Finally, in
Sec.~\ref{Sec:LowBoundForTheTopologicalRank},
we construct an auxiliary
$\rl{s}\text{-superalgebras}$
and considering the identities of their Grassmann envelopes
obtain the low bound
$\mathrm{r_t}\bigl(\rl{s}\bigr)\geqslant s$.

\section{Preliminaries}%
\label{Sec:BasicOperatorRelations}

Throughout the paper,
$F$ is a field of characteristic
$\mathrm{char}(F)\neq 2,3$;
all vector spaces (algebras, superalgebras) are considered over~$F$;
$X=\{x_1,x_2,\ldots\}$
is a countable set;
$\mathfrak A=\svob{\mathrm{RA}_2}$
is a free $\mathrm{RA}_2\text{-algebra}$
on the set $X$ of generators;
$R_x$ and $L_x$ are, respectively,
the operators of right and left multiplication by the element $x$;
$H_x=R_x-L_x$;
$\mathfrak A^*$ is the associative algebra
generated by all the operators $R_x$ and $L_x$, for $x\in\mathfrak A$, acting on~$\mathfrak A^2$
and by the identical mapping
$\mathrm{id}$;
$\vr A$
is the variety generated by an algebra~$A$.

\smallskip

Recall~\cite{Kuz'min06,Pchelintsev07} that $\mathfrak A^*$ satisfies the relations
\begin{align}
&R^2_x=0,\label{eq: RR}\\
&\left[R_xR_y,L_z\right]=0,\label{eq: RRL}\\
&\left[R_x,L_y\right]=-L_xL_y.\label{eq: RL-LL}
\end{align}

Relations~\eqref{eq: RR},~\eqref{eq: RRL} imply immediately the following
\begin{lemma}\label{lm R2 centr}
The operator~$R_xR_y$
lies in the center of~$\mathfrak A^*$.
\end{lemma}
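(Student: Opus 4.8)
The plan is to show that $R_xR_y$ commutes with each generator of the associative algebra $\mathfrak A^*$; since $\mathfrak A^*$ is generated by the operators $R_z$, the operators $L_z$, and the identity map $\mathrm{id}$, commuting with all of these forces $R_xR_y$ into the center. Commutation with $\mathrm{id}$ is automatic, so only the $R_z$ and the $L_z$ require attention.

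The key preliminary step is to polarize relation~\eqref{eq: RR}. Because right multiplication is $F$-linear in its subscript, we have $R_{x+y}=R_x+R_y$; expanding $R_{x+y}^2=0$ and subtracting $R_x^2=R_y^2=0$ yields the anticommutation rule $R_xR_y+R_yR_x=0$. This is the only place where care is needed, and the justification is simply the linearity of $R_{(\cdot)}$.

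Given anticommutativity, commutation with the right-multiplication operators follows by sliding $R_z$ through the product: $R_xR_yR_z=-R_xR_zR_y=R_zR_xR_y$, so that $\com{R_xR_y}{R_z}=0$. Commutation with the left-multiplication operators is handed to us directly by relation~\eqref{eq: RRL}, which asserts $\com{R_xR_y}{L_z}=0$. Having checked every generator, we conclude that $R_xR_y$ is central. I do not expect any genuine obstacle here; the argument is immediate once~\eqref{eq: RR} is polarized to anticommutativity and~\eqref{eq: RRL} is invoked verbatim.
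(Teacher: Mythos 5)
Your proof is correct and is exactly the argument the paper intends: the lemma is stated as an immediate consequence of~\eqref{eq: RR} (polarized to $R_xR_y+R_yR_x=0$, giving commutation with every $R_z$) and~\eqref{eq: RRL} (giving commutation with every $L_z$), which together cover all generators of $\mathfrak A^*$. No gaps.
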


\smallskip

\begin{proposition}\label{predl predvsoot}
The algebra $\mathfrak A^*$ satisfies the relation
\begin{equation}\label{soot 3R2}
3R_xR_y+H_xH_y=2\left[R_x,H_y\right]+H_xR_y+H_yR_x.
\end{equation}
\end{proposition}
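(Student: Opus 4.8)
The plan is to treat \eqref{soot 3R2} as an identity in the associative operator algebra $\mathfrak A^*$ and verify it by a direct expansion, since both sides are already words in the right and left multiplication operators. First I would remove every occurrence of $H$ by substituting $H_x=R_x-L_x$ and $H_y=R_y-L_y$, and rewrite the two sides in terms of the six products $R_xR_y$, $R_yR_x$, $R_xL_y$, $L_yR_x$, $L_xR_y$ and $L_xL_y$. The left-hand side expands to $4R_xR_y-R_xL_y-L_xR_y+L_xL_y$. On the right-hand side the commutator contributes $2\left[R_x,H_y\right]=2R_xR_y-2R_xL_y-2R_yR_x+2L_yR_x$, while $H_xR_y+H_yR_x=R_xR_y-L_xR_y+R_yR_x-L_yR_x$, so the whole right-hand side equals $3R_xR_y-2R_xL_y-R_yR_x+L_yR_x-L_xR_y$.

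Subtracting, the claimed identity \eqref{soot 3R2} becomes equivalent to the vanishing of
\[
R_xR_y+R_yR_x+R_xL_y-L_yR_x+L_xL_y .
\]
Here the last three terms regroup as $\left(R_xL_y-L_yR_x\right)+L_xL_y=\left[R_x,L_y\right]+L_xL_y$, which is exactly zero by relation \eqref{eq: RL-LL}. Thus the entire expression collapses to $R_xR_y+R_yR_x$.

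It then remains only to see that $R_xR_y+R_yR_x=0$, and this is the polarization of relation \eqref{eq: RR}: replacing $x$ by $x+y$ in $R_x^2=0$ and using the additivity $R_{x+y}=R_x+R_y$ gives $R_x^2+R_xR_y+R_yR_x+R_y^2=0$, where the two squares vanish. Substituting this back shows that the difference of the two sides of \eqref{soot 3R2} is zero, which is the assertion of the proposition.

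I do not expect a genuine obstacle here; the argument is a routine operator computation once the substitution $H_x=R_x-L_x$ is made. The only points requiring care are the bookkeeping of signs and the order of the non-commuting factors, together with the observation that precisely two of the preliminary relations are used — relation \eqref{eq: RL-LL} to clear the mixed $R,L$ terms and the polarized form of \eqref{eq: RR} to clear the remaining $RR$ terms — whereas neither \eqref{eq: RRL} nor Lemma~\ref{lm R2 centr} is needed for this particular identity.
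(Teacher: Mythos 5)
Your proof is correct and is essentially the paper's own argument: both amount to expanding $H_x=R_x-L_x$, $H_y=R_y-L_y$ and reducing the difference of the two sides to zero using only relation \eqref{eq: RL-LL} and the linearized form $R_xR_y+R_yR_x=0$ of \eqref{eq: RR}. The paper merely organizes the same computation by regrouping into commutators along the way rather than comparing full expansions in the $R,L$ monomials, so there is no substantive difference.
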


\begin{proof}
Using~\eqref{eq: RL-LL}, we have
\begin{multline*}
H_xH_y=\left(R_x-L_x\right)\left(R_y-L_y\right)=R_xR_y+L_xL_y-L_xR_y-R_xL_y=\\
=R_xR_y-\left[R_x,L_y\right]-L_xR_y-L_yR_x-\left[R_x,L_y\right]=\\
=R_xR_y-2\left[R_x,L_y\right]-L_xR_y-L_yR_x.
\end{multline*}
Combining the obtained relation with~\eqref{eq: RR} and~\eqref{eq: RL-LL},
we get
\begin{multline*}
3R_xR_y+H_xH_y=4R_xR_y-2\left[R_x,L_y\right]-L_xR_y-L_yR_x=\\
=2\left[R_x,R_y\right]-2\left[R_x,L_y\right]+\left(H_x-R_x\right)R_y+\left(H_y-R_y\right)R_x=\\
=2\left[R_x,H_y\right]+H_xR_y+H_yR_x.
\end{multline*}
\end{proof}

In what follows, we use the symbol
$T$
as a common notation for the operator symbols
$R$
and
$H$.
The notation
$w=T_{x}\dots T_{y}$
means that each operator symbol of the word
$w$
can be equal to
$R$
or
$H$
independently.
In the case when all operator symbols in some word are assumed to be equal to each other,
we use the notation
$$
\oprT\left(i_1,\dots,i_n\right)=
\left\{
\begin{aligned}
R_{x_{i_1}}\dots R_{x_{i_n}},&\enskip\text{ if }\:T=R,\\
H_{x_{i_1}}\dots H_{x_{i_n}},&\enskip\text{ if }\:T=H
\end{aligned}
\right.
$$
and set
$\oprT(\varnothing)=\mathrm{id}$.

\begin{lemma}\label{predl espH}
The algebra $\mathfrak A^*$ is spanned by the operators
$$
\oprH\left(i_1,\dots,i_n\right)\oprR\left(j_1,\dots,j_m\right).
$$
\end{lemma}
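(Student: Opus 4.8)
The plan is to reduce the statement to a rewriting (``sorting'') problem for words in the operators $R_x$ and $H_x$, and then to close it by an induction that simultaneously controls the length of a word and the number of its badly ordered letter pairs.

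First I would note that, since $H_x=R_x-L_x$, we have $L_x=R_x-H_x$, so $\mathfrak A^*$ is generated by the identity together with all $R_x$ and $H_x$; hence it is spanned by the words in which each letter is an $R$- or an $H$-operator. It therefore suffices to prove that every such word is a linear combination of the \emph{sorted} words $\oprH\left(i_1,\dots,i_n\right)\oprR\left(j_1,\dots,j_m\right)$, i.e. words in which all $H$-letters precede all $R$-letters. The engine of the argument is the swap relation extracted from Proposition~\ref{predl predvsoot}: writing $\com{R_x}{H_y}=R_xH_y-H_yR_x$ in~\eqref{soot 3R2} and using $\mathrm{char}(F)\neq2$ yields
\[
R_xH_y=\frac{1}{2}\left(3R_xR_y+H_xH_y+H_yR_x-H_xR_y\right),
\]
which rewrites an adjacent ``inverted'' pair $R_xH_y$ as a combination of the pairs $R_xR_y$, $H_xH_y$, $H_yR_x$, $H_xR_y$.

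I would run a strong induction on the pair $(k,\mu)$ taken in the lexicographic order, where $k$ is the length of a word and $\mu$ counts the pairs of positions $a<b$ carrying an $R$-letter in position $a$ and an $H$-letter in position $b$; thus $\mu=0$ characterises the sorted words. If the word has two adjacent $R$-letters $R_aR_b$, then by Lemma~\ref{lm R2 centr} the factor $R_aR_b$ is central in $\mathfrak A^*$ and may be slid to the right-hand end, leaving a word of length $k-2$ to which the inductive hypothesis applies; appending the tail $R_aR_b$ preserves sortedness. Otherwise I locate the leftmost $R$-letter: if there is none, or if it is the final letter, the word already has the form $\oprH(\cdots)$ or $\oprH(\cdots)R_x$ and we are done, and in the remaining case the letter just to its right must be an $H$ (there is no adjacent $RR$), so I apply the swap relation to that specific adjacent pair $R_xH_y$.

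The one step demanding care is the term $H_xH_y$ produced by the swap: converting an $R$-letter into an $H$-letter can create new inverted pairs with the $R$-letters standing further to the left and so threatens to \emph{increase} $\mu$. This is exactly why the swap is applied at the leftmost $R$, whose entire prefix consists of $H$-letters: with no $R$ to the left of the swapped pair, a direct count of inverted pairs shows that each of the three equal-length terms $H_xH_y$, $H_yR_x$, $H_xR_y$ has strictly smaller $\mu$ than the original word, while the remaining term $3R_xR_y$ is removed by the centrality argument above, which lowers $k$. Every resulting term is thus strictly smaller in the lexicographic order, the induction closes, and the original word is expressed through sorted operators, proving the lemma.
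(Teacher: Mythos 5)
Your proof is correct and follows essentially the same route as the paper: both arguments reduce to sorting words in $R$ and $H$ using relation~\eqref{soot 3R2} to rewrite an adjacent pair $R_xH_y$ and Lemma~\ref{lm R2 centr} to dispose of the resulting $R_xR_y$ factors. The paper organizes the induction as two closure properties of the span of sorted words (under left multiplication by $\oprR(k)$ and right multiplication by $\oprH(k)$, inducting on the length of the $H$-prefix), whereas you use a lexicographic measure on length and inversion count, but this is only a difference of bookkeeping.
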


\begin{proof}
Let $\mathcal I$ be a linear span of all operators
$\oprH\left(i_1,\dots,i_n\right)\oprR\left(j_1,\dots,j_m\right)$.
It suffices to prove the inclusions
$\oprR(k)\mathcal I\subseteq\mathcal I$
and
$\mathcal I \oprH(k)\subseteq\mathcal I$.
Note that~\eqref{soot 3R2} yields
$\oprR(i)\oprH(j)\in\mathcal I$.
Hence the inclusion
$\oprR(k)\mathcal I\subseteq\mathcal I$
can be easily proved by induction on the length of the operator
$\oprH\left(i_1,\dots,i_n\right)$.
At the same time,
Lemma~\ref{lm R2 centr} implies
$\mathcal I \oprH(k)\subseteq\mathcal I$.
\end{proof}

Let
$\mathcal L$
be a \textit{linear span in~$\mathfrak A^*$ of all operators of the form}
$$
L_{x_i}w,\quad w\in\mathfrak A^*.
$$
By virtue of~\eqref{eq: RL-LL},
$\mathcal L$
forms an ideal of~$\mathfrak A^*$
and by induction on~$n$ one can prove the congruence
\begin{equation}\label{eq: HkequivRk}
\oprH\left(1,\dots,n\right)\equiv
\oprR\left(1,\dots,n\right)
\pmod{\mathcal L}, \quad n\in\mathbb N.
\end{equation}

\section{Relations of the free $\rl{s}\text{-algebra}$}%
\label{Sec:RelationsOfTheFreeAlgebra}

Let
${\mathfrak A}_{s}=\svob{\rl{s}}$
be the
\textit{free $\rl{s}\text{-algebra}$ on the set
$X$ of generators.}
Lemma~\ref{predl espH} implies immediately the following

\begin{lemma}\label{lm espH}
The linear span of all operators of degree~$d\geqslant s$ in~${\mathfrak A}_{s}^*$
is spanned by the operators
$$
\oprH\left(i_1,\dots,i_n\right)\oprR\left(j_1,\dots,j_{d-n}\right),\quad n<s.
$$
\end{lemma}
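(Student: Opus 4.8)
The plan is to translate the Lie-nilpotency identity~\eqref{eq Lie-nilp} into a statement about the operator symbol $H$. The starting point is the elementary remark that, by the definition $H_x=R_x-L_x$, applying $H_x$ to any element $a\in\mathfrak A_s$ produces a commutator: $aH_x=ax-xa=\com{a}{x}$. Iterating this, and keeping in mind that the operators act on $\mathfrak A_s^2$ on the right so that the leftmost symbol of a word is applied first, one finds that $\oprH(i_1,\dots,i_n)$ sends $a$ to the left-normed commutator $\bigl[\ldots\com{a}{x_{i_1}},\dots,x_{i_n}\bigr]$ of length $n+1$. I would verify this identification by a one-line induction on $n$.

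The key step is then immediate. Taking $n=s$, the element $a\,\oprH(i_1,\dots,i_s)$ is a left-normed commutator in the $s+1$ arguments $a,x_{i_1},\dots,x_{i_s}$, and hence is zero by the Lie-nilpotency identity~\eqref{eq Lie-nilp}, which is precisely the vanishing of every left-normed commutator of length $s+1$. Since this holds for every $a\in\mathfrak A_s$, and in particular for every $a\in\mathfrak A_s^2$, the operator $\oprH(i_1,\dots,i_s)$ equals $0$ in $\mathfrak A_s^*$. Consequently every word $\oprH(i_1,\dots,i_n)\oprR(j_1,\dots,j_m)$ with $n\geqslant s$ is the zero operator, because such a word has the vanishing factor $\oprH(i_1,\dots,i_s)$ as its initial segment.

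To finish, I would combine this with Lemma~\ref{predl espH}, which exhibits $\mathfrak A_s^*$ as the linear span of the operators $\oprH(i_1,\dots,i_n)\oprR(j_1,\dots,j_m)$. Discarding from this spanning set the operators with $n\geqslant s$, just shown to vanish, leaves the operators with $n<s$ as a spanning set; restricting to the homogeneous component of a fixed degree $d\geqslant s$ yields the assertion, since every surviving word of degree $d$ has the form $\oprH(i_1,\dots,i_n)\oprR(j_1,\dots,j_{d-n})$ with $n<s$. There is no real obstacle here: the statement is an immediate consequence of Lemma~\ref{predl espH} once $\oprH(i_1,\dots,i_s)$ has been recognized as a forbidden Lie-nilpotency commutator, and the only point requiring mild care is the bookkeeping of the order in which the operators are composed.
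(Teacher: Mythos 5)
Your argument is correct and is exactly the intended one: the paper states the lemma as an immediate consequence of Lemma~\ref{predl espH}, and the missing observation is precisely yours, namely that $a\oprH(i_1,\dots,i_s)$ is the left-normed commutator $\bigl[\ldots\com{a}{x_{i_1}},\ldots,x_{i_s}\bigr]$, which vanishes in ${\mathfrak A}_{s}$ by identity~\eqref{eq Lie-nilp}, so all spanning words with $n\geqslant s$ leading $H$-symbols are zero. Nothing further is needed.
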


\smallskip

In what follows, the term
\textit{"polynomial"}
means a homogeneous polynomial of degree not less then two.

\begin{definition}
Let~$\approx$ be a symmetric relation on the set of polynomials of $\mathfrak A$
such that
$f_0\approx f_{1}$
if
$f_{i}=f_{1-i}\oprR\left(j_1,\dots,j_{2k}\right)$,
$i\in\{0,1\}$,
and $f_{1-i}$ doesn't depend on the variables
$x_{j_1},\dots, x_{j_{2k}}$.
By the same symbol~$\approx$ we denote the induced relation on~$\mathfrak A^*$:
$\xi\approx\eta$
for
$\xi,\eta\in\mathfrak A^*$
if
$(x_i x_j)\xi\approx(x_i x_j)\eta$
and
$\xi,\eta$ do not depend on $x_i,x_j$.
\end{definition}

\begin{proposition}
The algebra ${\mathfrak A}_{s}$ satisfies the relation
\begin{equation}\label{eq: xkub}
x^3\approx0.
\end{equation}
\end{proposition}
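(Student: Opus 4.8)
The plan is to reduce the relation to a multilinear one and then verify it by a direct computation with the operator relations of Sec.~\ref{Sec:BasicOperatorRelations}. Since $\mathrm{char}(F)\neq 2,3$, the relation \eqref{eq: xkub} is equivalent to its complete linearization, so it suffices to treat the multilinear polynomial obtained from $\dva{\dva{x}{x}}{x}$ by replacing the three occurrences of $x$ by distinct variables and summing over all their permutations. This is exactly where the restriction on the characteristic is used: recovering $x^3$ from its full linearization requires inverting the integers up to $3!=6$.

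Next I would fold the six terms of the linearization by means of the right-alternative identity \eqref{linprav}. Writing each summand as a square followed by a single right multiplication and using $\ass{u}{v}{w}=-\ass{u}{w}{v}$ to pair the terms off, the linearized cube collapses to a small combination of squares of the shape $\dva{x_j}{x_k}+\dva{x_k}{x_j}$ multiplied by a right (or, equivalently via \eqref{eq: HkequivRk}, left) multiplication operator, the three terms corresponding to the three choices of the singled-out variable. The decisive vanishing then comes from \eqref{eq: RR}: applying \eqref{linprav} in the form $\ass{\dva{x}{x}}{x}{x}=0$ together with the metabelian identity \eqref{metab} gives $\dva{\dva{x}{x}}{x}\cdot x=\dva{\dva{x}{x}}{\dva{x}{x}}=0$, which is precisely $R_x^2=0$ acting on the square $\dva{x}{x}$, and, more generally, the metabelian identity forces every product of two squares to vanish.

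Using this vanishing, the centrality of $R_xR_y$ (Lemma~\ref{lm R2 centr}), the relation \eqref{eq: RL-LL}, and the congruence \eqref{eq: HkequivRk}, I would rewrite the folded expression modulo left multiplications and modulo even strings of right multiplications, and read off \eqref{eq: xkub}. I expect the main obstacle to be exactly this last bookkeeping step: one has to commute the right multiplications past the left multiplications produced by the folding, control the resulting terms lying in the ideal $\mathcal L$, and show — invoking $R_x^2=0$ and the centrality of $R_xR_y$ once more — that everything either cancels or is absorbed into the even right-multiplication tails that the relation $\approx$ discards.
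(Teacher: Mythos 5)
There is a genuine gap, and it sits exactly at your ``decisive vanishing'' step. By the definition of $\approx$, the relation $x^3\approx0$ means that $x^3\oprR\left(j_1,\dots,j_{2k}\right)=0$ for an even string of right multiplications by \emph{fresh} variables on which $x^3$ does not depend. Your computation $\tri{x}{x}{x}x=\dva{x}{x}\dva{x}{x}=0$ establishes $x^3R_x=0$, a statement about the \emph{same} variable $x$; it says nothing about $x^3R_{y_1}R_{y_2}$ with $y_1\neq y_2$ fresh, and relation~\eqref{eq: RR} only makes $R_{y_1}$ and $R_{y_2}$ anticommute, it does not annihilate $x^3$. Relatedly, $\approx$ does not let you ``absorb'' terms into even right-multiplication tails: $f\approx0$ is precisely the assertion that $f$ times such a tail vanishes, which is what has to be proved. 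Most tellingly, your outline uses only~\eqref{linprav},~\eqref{metab} and the operator relations of Sec.~\ref{Sec:BasicOperatorRelations}, all of which already hold in the free $\mathrm{RA}_2$-algebra $\mathfrak A$; the proposition, however, is stated for $\mathfrak A_s$, and the Lie-nilpotency identity~\eqref{eq Lie-nilp} is indispensable. An argument that never invokes~\eqref{eq Lie-nilp} cannot close.

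The paper's proof supplies the missing mechanism. One first shows
$2yx^3=y\left(\jord{x}{x^2}\right)=\left(yx^2\right)x=\tri{y}{x}{x}x=\dva{y}{x}x^2=0$
using~\eqref{linprav} and~\eqref{metab}, whence $x^3\mathcal L=0$, i.e.\ $x^3$ is annihilated by every operator beginning with a left multiplication. The congruence~\eqref{eq: HkequivRk} then converts the fresh right multiplications into $H$'s, $x^3\oprR\left(1,\dots,n\right)=x^3\oprH\left(1,\dots,n\right)$, and the latter is the iterated commutator $\bigl[\ldots\com{x^3}{x_1},\ldots,x_n\bigr]$, which vanishes for even $n\geqslant s$ by~\eqref{eq Lie-nilp}. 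This is the step your folding-and-bookkeeping plan does not and cannot reproduce. (The full linearization you begin with is also unnecessary: the only uses of $\mathrm{char}(F)\neq2,3$ here are $\ass{x}{x}{x}=0$ and dividing $2yx^3=0$ by $2$.)
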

\begin{proof}
Using~\eqref{linprav} and~\eqref{metab}, we have
$$
2yx^3=y\left(\jord{x}{x^2}\right)=\left(yx^2\right)x=
\tri{y}{x}{x}x=\dva{y}{x}x^2=0.
$$
Hence,
$x^3\mathcal L=0$.
Therefore
applying~\eqref{eq: HkequivRk},
for even
$n\geqslant s$,
and taking into account~\eqref{eq Lie-nilp},
we obtain
$$
x^3\approx x^3\!
\oprR\left(1,\dots,n\right)=x^3\!\oprH\left(1,\dots,n\right)=0.
$$
\end{proof}

We say that
\textit{almost all polynomials of
${\mathfrak A}_{s}$
(operators of ${\mathfrak A}_{s}^*$)
satisfy some condition $\vartheta$}
if there is a natural $n$ such that
$\vartheta$
holds for all polynomials
(operators) of degree more then $n$.

\begin{lemma}\label{lm approx}
If
$f\approx 0$
for
$f\in{\mathfrak A}_{s}$,
then almost all operators of
${\mathfrak A}_{s}^*$
annihilate $f$.
\end{lemma}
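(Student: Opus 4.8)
The plan is to read the hypothesis $f\approx 0$ through the definition of $\approx$: for a nonzero polynomial $f$ it means precisely that $f\,\oprR(l_1,\dots,l_{2k})=0$ in ${\mathfrak A}_{s}$ for some even number $2k$ of indices $l_1,\dots,l_{2k}$ on which $f$ does not depend (this is exactly how $x^3\approx 0$ was used). Let $V$ be the finite set of variables occurring in $f$. By Lemma~\ref{lm espH} every operator of degree $d\geqslant s$ is a linear combination of operators $\oprH(i_1,\dots,i_n)\oprR(j_1,\dots,j_{d-n})$ with $n<s$, so by linearity it suffices to show that each such operator annihilates $f$ once its degree is large. Concretely, I would set $N=|V|+2k+s$ and prove that $f\cdot\oprH(i_1,\dots,i_n)\oprR(j_1,\dots,j_m)=0$ for every such operator of degree $d=n+m>N$.

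First I would dispose of the degenerate case: by \eqref{eq: RR} together with its polarization $R_xR_y+R_yR_x=0$, the word $\oprR(j_1,\dots,j_m)$ vanishes whenever two of its indices coincide, so I may assume $j_1,\dots,j_m$ pairwise distinct. Since $n<s$ and $m=d-n>|V|+2k$, strictly more than $2k$ of these distinct indices lie outside $V$; fix $2k$ of them, say $l'_1,\dots,l'_{2k}$. Reordering the anticommuting factors produces a sign $\varepsilon=\pm1$ and a factorization $\oprR(j_1,\dots,j_m)=\varepsilon\,\oprR(l'_1,\dots,l'_{2k})\oprR(\mathbf r)$, where $\oprR(\mathbf r)$ is the word on the remaining indices. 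By Lemma~\ref{lm R2 centr} the even block $\oprR(l'_1,\dots,l'_{2k})$, being a product of $k$ central operators of the form $R_xR_y$, lies in the centre of ${\mathfrak A}_{s}^*$; hence it commutes past $\oprH(i_1,\dots,i_n)$ and I may slide it onto $f$:
\[
f\,\oprH(i_1,\dots,i_n)\oprR(j_1,\dots,j_m)=\varepsilon\,\bigl(f\,\oprR(l'_1,\dots,l'_{2k})\bigr)\oprH(i_1,\dots,i_n)\oprR(\mathbf r).
\]
Finally, since $l'_1,\dots,l'_{2k}$ lie outside $V$, the renaming endomorphism of ${\mathfrak A}_{s}$ that fixes $V$ and sends $x_{l_a}\mapsto x_{l'_a}$ carries the relation $f\,\oprR(l_1,\dots,l_{2k})=0$ to $f\,\oprR(l'_1,\dots,l'_{2k})=0$, so the right-hand side vanishes and the claim follows.

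I expect the delicate point to be this last step. The hypothesis $f\approx0$ only supplies the particular fresh variables $l_1,\dots,l_{2k}$, whereas the operator forces me to work with whichever fresh indices $l'_1,\dots,l'_{2k}$ happen to occur in its $R$-part; passing between them requires invoking invariance of the identities of the free algebra under renaming of generators. Moreover the argument relies on two structural inputs working in tandem—anticommutativity of the $R_x$, from \eqref{eq: RR}, to isolate an \emph{even} fresh block, and the centrality of $R_xR_y$, from Lemma~\ref{lm R2 centr}, to move that block across the $H$-factors and onto $f$. The counting guarantee that a sufficiently long $R$-word must contain at least $2k$ indices avoiding the finite set $V$ is precisely what turns the conclusion into ``almost all'' rather than ``all'' operators.
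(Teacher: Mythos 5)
Your proof is correct and follows essentially the same route as the paper: express a high-degree operator via Lemma~\ref{lm espH}, peel off a central block $\oprR(l'_1,\dots,l'_{2k})$ using Lemma~\ref{lm R2 centr}, and slide it onto $f$. In fact you are more careful than the paper, which takes the degree bound $s+2k$ and tacitly assumes the $R$-part ends in the given fresh indices $j_1,\dots,j_{2k}$; your counting of distinct indices outside $\mathrm{Var}(f)$ and the renaming endomorphism are exactly the details needed to make that step rigorous.
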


\begin{proof}
Assume that
$f\oprR\left(j_1,\dots,j_{2k}\right)=0$,
where
$f$ doesn't depend on
$x_{j_1},\dots, x_{j_{2k}}$.
In view of Lemma~\ref{lm espH},
every operator word
$\xi\in{\mathfrak A}_{s}^*$ of the degree~$d\geqslant s+2k$
can be represented as
$$
\xi=\eta\oprR\left(j_1,\dots,j_{2k}\right), \quad \eta\in{\mathfrak A}_{s}^*.
$$
Hence by Lemma~\ref{lm R2 centr}, we have
$$
f\xi=
f\oprR\left(j_1,\dots,j_{2k}\right)\eta=0.
$$
\end{proof}

\begin{lemma}\label{lm opercosn}
Almost all operators of
${\mathfrak A}_{s}^*$
are skew-symmetric with respect to all their variables.
\end{lemma}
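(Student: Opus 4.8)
The plan is to establish skew-symmetry directly: for an operator of sufficiently high degree I show that adding to it the operator obtained by transposing any two of its variables gives zero, which is exactly the assertion. By Lemma~\ref{lm espH} I may assume the operator is in the canonical form $\oprH\left(i_1,\dots,i_n\right)\oprR\left(j_1,\dots,j_m\right)$ with $n<s$, so there are three kinds of transpositions to treat: two variables of the $\oprR$-block, two variables of the $\oprH$-block, and one from each block. The first kind is settled at once, since linearizing \eqref{eq: RR} yields $R_xR_y+R_yR_x=0$, so the $\oprR$-block is already alternating.

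For the other two kinds I first extract from \eqref{soot 3R2} the symmetrized relation
\[
H_xH_y+H_yH_x=2\left(R_xH_y+R_yH_x\right),
\]
and, using $H_x=R_x-L_x$ together with $R_xR_y+R_yR_x=0$,
\[
H_xR_y+H_yR_x=-\left(L_xR_y+L_yR_x\right).
\]
Writing the operator as the product of the two transposed factors and a tail, its symmetrization is the product of the symmetrized pair and that same tail. The idea is to show that the symmetrized pair, applied to an arbitrary product $x_ax_b\in\mathfrak A_s^2$, produces a polynomial lying in the $\approx$-class of zero; then, because in high degree the tail consists of many operators, Lemma~\ref{lm approx} guarantees that the tail annihilates this output, so the symmetrized operator vanishes.

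For two $\oprH$-variables the symmetrized pair is $2\left(R_xH_y+R_yH_x\right)$, and applied to $x_ax_b$ it gives a sum of commutators $\left[(x_ax_b)x,y\right]+\left[(x_ax_b)y,x\right]$. Appending a block of right multiplications and replacing it, modulo $\mathcal L$, by the corresponding $\oprH$-block via \eqref{eq: HkequivRk}, one obtains a commutator followed by arbitrarily many $H$-factors, which vanishes once there are $s-1$ of them by the Lie-nilpotency identity \eqref{eq Lie-nilp}; hence the output is $\approx 0$. For the mixed transposition the symmetrized pair $-\left(L_xR_y+L_yR_x\right)$ lies in $\mathcal L$ and its output is no longer a commutator, so Lie-nilpotency is not directly available; here I rely instead on the cube relation \eqref{eq: xkub}, arranging via the metabelian identity \eqref{metab} and right alternativity \eqref{linprav} a cube factor in the repeated variable, which again forces the output into the class of $0$. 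Since for a given canonical form there are only finitely many transpositions, choosing the degree above the maximum of the finitely many thresholds furnished by Lemma~\ref{lm approx} makes all of these symmetrizations vanish simultaneously.

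The routine parts are the two symmetrized relations above and the final appeal to Lemma~\ref{lm approx}. I expect the genuine obstacle to be the treatment of the mixed transposition together with the $\mathcal L$-valued correction terms: there the acted-upon product $x_ax_b$ interferes with the repeated variable and Lie-nilpotency does not apply, so one must carefully reduce the $\mathcal L$-part to the cube relation \eqref{eq: xkub} using only the metabelian and right-alternative identities.
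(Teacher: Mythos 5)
Your skeleton coincides with the paper's: pass to the canonical words of Lemma~\ref{lm espH} and check the three types of variable coincidence, the $\oprR$-block being settled by $R_xR_y+R_yR_x=0$. But the two remaining cases are where all the content lies, and your treatment of the $H$--$H$ case has a genuine gap. You correctly symmetrize~\eqref{soot 3R2} to $H_xH_y+H_yH_x=2\left(R_xH_y+R_yH_x\right)$ and compute the output on $f\in{\mathfrak A}_{s}^2$ as $\com{fx}{y}+\com{fy}{x}$; you then propose to append $\oprR\left(j_1,\dots,j_{2m}\right)$, replace it by $\oprH\left(j_1,\dots,j_{2m}\right)$ modulo $\mathcal L$ via~\eqref{eq: HkequivRk}, and kill the resulting long commutator by~\eqref{eq Lie-nilp}. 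The congruence, however, leaves a correction term $\com{fx}{y}\theta$ with $\theta\in\mathcal L$, i.e.\ a sum of terms of the form $\bigl(x_i\com{fx}{y}\bigr)w$, and nothing forces these to vanish: $\com{fx}{y}$ lies in ${\mathfrak A}_{s}^2$, but left multiplication by a generator does not annihilate it, since~\eqref{metab} only kills products of two elements of ${\mathfrak A}_{s}^2$. Note that when the paper itself invokes~\eqref{eq: HkequivRk} --- in proving $x^3\approx0$ --- it first verifies $x^3\mathcal L=0$ explicitly; you have no analogue of that verification here, and I do not see one. So as written the $H$--$H$ case is not proved.

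The mixed case, which you defer as ``the genuine obstacle,'' is in fact the short step and is exactly where the paper begins: the partial linearization of~\eqref{eq: xkub} at $w\in{\mathfrak A}_{s}^2$ reads $\dva{w}{x}x+\dva{x}{w}x+x^2w\approx0$, the first summand dying by~\eqref{eq: RR} and the third by~\eqref{metab}, which leaves $wL_xR_x\approx0$, i.e.\ $H_xR_x=-L_xR_x\approx0$; no $\mathcal L$-valued corrections ever arise. The paper then obtains the $H$--$H$ case from this, bypassing your problematic step entirely:
$$
H_xH_x\approx H_xH_xR_yR_z\approx-H_xH_yR_xR_z=-H_xR_xR_zH_y\approx0,
$$
using only the just-established mixed skew-symmetry and the centrality of $R_xR_z$ (Lemma~\ref{lm R2 centr}). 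I recommend reorganizing your argument in this order: mixed case first from the linearized cube, then the $H$--$H$ case as a consequence.
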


\begin{proof}
We set $w\in{\mathfrak A}_{s}^2$.
By virtue of~\eqref{metab} and~\eqref{eq: RR},
the partial linearization~(see~\cite[Chap.~1]{Zhevlakov-Slin'ko-Shestakov-Shirshov}) of~\eqref{eq: xkub} has the form
$$
\dva{w}{x}x+\dva{x}{w}x+x^2w=\dva{x}{w}x\approx0,
$$
whence,
$H_{x}R_{x}=-L_{x}R_{x}\approx0$.
Thus in view of Lemma~\ref{lm espH}, it remains to calculate
$$
H_{x}H_{x}\approx H_{x}H_{x}R_yR_z\approx -H_{x}H_{y}R_xR_z =
-H_{x}R_xR_zH_{y}\approx0.
$$
\end{proof}

\begin{proposition}
The algebra
${\mathfrak A}_{s}$
satisfies the relation
\begin{equation}\label{eq: xyTxTy}
(xy)T_xT_y\approx0.
\end{equation}
\end{proposition}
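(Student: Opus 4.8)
The statement splits into the four cases $T_xT_y\in\bigl\{R_xR_y,\,R_xH_y,\,H_xR_y,\,H_xH_y\bigr\}$, and the plan is to work throughout modulo $\approx$, freely appending a long even block of fresh right multiplications $\oprR\left(a_1,\dots,a_{2k}\right)$ to the operator part. By Lemma~\ref{lm approx} this is harmless, and it lets me assume the operator has arbitrarily large degree, so that Lemma~\ref{lm espH} puts it in the standard form and Lemma~\ref{lm opercosn} makes it skew-symmetric in all its variables. Since the relations $R_x^2=0$ (from~\eqref{eq: RR}), $H_xR_x\approx0$ and $H_xH_x\approx0$ already express that two equal variables annihilate a high-degree word, skew-symmetry holds \emph{across} the $H$- and $R$-blocks; in particular $H_xR_y+H_yR_x\approx0$ and $R_xH_y+R_yH_x\approx0$. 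Together with~\eqref{soot 3R2}, applied to the base $\dva{x}{y}$, these skew relations let me trade the purely-$R$ word against the purely-$H$ word and so reduce the number of independent cases, all of which I then treat by one uniform mechanism.

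The heart of the argument is to apply each operator to the base $\dva{x}{y}$ and collapse it with~\eqref{linprav} and~\eqref{metab}. For $R_xR_y$ one has $\dva{x}{y}R_xR_y=\dva{\dva{\dva{x}{y}}{x}}{y}$; the right alternative identity gives $\dva{\dva{x}{y}}{x}=-\dva{x^2}{y}+x\jos{x}{y}$, then $\dva{\dva{x^2}{y}}{y}=x^2R_y^2=0$ by~\eqref{eq: RR}, and pushing the surviving associator through the metabelian identity~\eqref{metab} (which annihilates $\dva{x}{y}\,u$ for $u\in\mathfrak A^2$) reduces this term to the degree-four element $x\jos{y}{\jos{x}{y}}$. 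The three remaining cases collapse the same way to commutators, or to commutators multiplied once on the right, for example $\dva{x}{y}H_xH_y=\com{\com{\dva{x}{y}}{x}}{y}$, $\dva{x}{y}R_xH_y=\com{\dva{\dva{x}{y}}{x}}{y}$ and $\dva{x}{y}H_xR_y=\dva{\com{\dva{x}{y}}{x}}{y}$. Thus in every case the claim is reduced to showing that one explicit degree-four element $w$ satisfies $w\approx0$.

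To finish I would reuse the device from the proof of~\eqref{eq: xkub}: append to $w$ an even block $\oprR\left(1,\dots,n\right)$ of fresh right multiplications with $n\geqslant s$, replace it by $\oprH\left(1,\dots,n\right)$ modulo $\mathcal L$ via~\eqref{eq: HkequivRk}, and observe that $w\,\oprH\left(1,\dots,n\right)=[[\dots[w,x_1],\dots],x_n]$ is a left-normed commutator in which $n\geqslant s$ fresh variables are bracketed onto $w$, hence $0$ by the Lie-nilpotency identity~\eqref{eq Lie-nilp}. Consequently $w\approx w\,\oprR\left(1,\dots,n\right)=0$, which is what is required.

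The main obstacle is precisely the residual term produced by the substitution $\oprR\left(1,\dots,n\right)\mapsto\oprH\left(1,\dots,n\right)$. Unlike the case of $x^3$, where the clean identity $x^3\mathcal L=0$ was available, the reduced elements $w$ here are \emph{not} annihilated by the ideal $\mathcal L$, so the leftover $w\bigl(\oprR\left(1,\dots,n\right)-\oprH\left(1,\dots,n\right)\bigr)$ does not vanish for the same trivial reason and must be controlled separately. I expect this to be the delicate point: the leftover expands into words each carrying a left multiplication $L_{a_i}$ in one of the fresh variables, and these should be killed by skew-symmetry (a repeated operator variable forces $0$) together with the diagonal relations $H_xR_x\approx0$ and $R_xH_x\approx0$. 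This is also the only place where the full strength of~\eqref{eq Lie-nilp}, rather than a naive commutator-length count, genuinely enters.
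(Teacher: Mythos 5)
Your reduction of each case to an explicit degree-four element is sound algebra, but the proof does not close, and you say so yourself: the final step, showing that these elements $w$ satisfy $w\approx0$, is left as an expectation, and the mechanism you propose for it cannot work. Writing $w\oprR\left(1,\dots,n\right)=w\oprH\left(1,\dots,n\right)-w\theta$ with $\theta\in\mathcal L$ via~\eqref{eq: HkequivRk}, the leftover $w\theta$ is a sum of terms $\dva{a_j}{w}\theta_j$ in which $a_j$ is one of the fresh appended variables and each $\theta_j$ is a word in the remaining fresh variables, each occurring exactly once. There is no repeated operator variable anywhere in these terms, so the skew-symmetry of Lemma~\ref{lm opercosn} and the diagonal relations $H_xR_x\approx0$, $R_xH_x\approx0$ give nothing; killing $w\theta$ requires precisely the information $w\mathcal L=0$ that you concede is unavailable (and indeed proving $w\theta=0$ is equivalent to proving $w\oprR\left(1,\dots,n\right)=0$, so the step is circular). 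Your preliminary case reduction is also unsubstantiated: the skew relations only give $\dva{x}{y}H_xR_y\approx-\dva{x}{y}H_yR_x$, which is not a renaming of the same case since the base $\dva{x}{y}$ is untouched, and~\eqref{soot 3R2} only ties the sum of the two pure words to the sum of the two mixed ones.

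The paper's route is different and much shorter. Appending a block of fresh $R$'s, using Lemma~\ref{lm opercosn} to exchange the operator variables $x,y$ of $T_xT_y$ with two of the fresh $R$-variables, and then pulling the resulting central pair $R_xR_y$ forward by Lemma~\ref{lm R2 centr}, one gets $\dva{x}{y}T_xT_y\approx\dva{x}{y}R_xR_yT_aT_b$; hence the single case $\dva{x}{y}R_xR_y\approx0$ suffices. That case is then arranged to land on a form already known to vanish: $\dva{x}{y}R_xR_y=-\dva{x}{y}R_yR_x$, and $\dva{x}{y}R_yR_x=\dva{x}{y^2}x=y^2L_xR_x\approx0$ by $\ass{x}{y}{y}=0$ and the relation $L_xR_x=-H_xR_x\approx0$ already established in the proof of Lemma~\ref{lm opercosn}. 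The idea you are missing is that the annihilation should come from the operator relation $H_xR_x\approx0$ (a consequence of the linearized~\eqref{eq: xkub}), not from the Lie-nilpotency identity~\eqref{eq Lie-nilp}.
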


\begin{proof}
By virtue of Lemmas~\ref{lm R2 centr},~\ref{lm opercosn},
it suffices to verify that
$(xy)R_xR_y\approx0$.
Using~\eqref{eq: RR},~\eqref{linprav}, and Lemma~\ref{lm opercosn}, we have
$$
\dva{x}{y}R_xR_y=-\dva{x}{y}R_yR_x=y^2L_xR_x\approx0.
$$
\end{proof}

\begin{proposition}
\label{predl soot RH2}
The algebra ${\mathfrak A}_{s}^*$ satisfies the relations
\begin{align}
3R_xR_y-2\left[R_x,H_y\right]+H_xH_y&\approx0,\label{soot 3R^2Rn}\\
\left[R_x,H_yH_z\right]&\approx0.\label{soot RH2}
\end{align}
\end{proposition}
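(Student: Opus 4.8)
The plan is to derive both relations from Proposition~\ref{predl predvsoot} (the base relation~\eqref{soot 3R2}) by passing to the quotient modulo the ideal generated by the $\approx$-relation, exploiting the simplifications that~\eqref{eq: xyTxTy} and the skew-symmetry of Lemma~\ref{lm opercosn} provide in high degree. The first relation~\eqref{soot 3R^2Rn} should follow almost directly from~\eqref{soot 3R2}: that exact identity reads $3R_xR_y+H_xH_y=2[R_x,H_y]+H_xR_y+H_yR_x$, so to reach $3R_xR_y-2[R_x,H_y]+H_xH_y\approx0$ I would need to show that the two mixed terms $H_xR_y+H_yR_x$, when placed after a product $(uv)$ in sufficiently high degree, are $\approx0$. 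I expect this to come from~\eqref{eq: xyTxTy}, which says $(xy)T_xT_y\approx0$ for any choice of $T\in\{R,H\}$ in each slot; in particular the combinations $H_xR_y$ and $H_yR_x$ acting on the appropriate quadratic leading part vanish up to $\approx$, so moving $H_xR_y+H_yR_x$ to the left and changing the sign of the commutator term yields exactly~\eqref{soot 3R^2Rn}.

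For the second relation~\eqref{soot RH2}, namely $[R_x,H_yH_z]\approx0$, the natural route is to expand the commutator as $R_xH_yH_z-H_yH_zR_x$ and rewrite each of the two operator words using Lemma~\ref{predl espH} (or Lemma~\ref{lm espH}) to bring them into the normal form $H\cdots H\,R\cdots R$, then compare. The key leverage is Lemma~\ref{lm R2 centr}, which puts any $R_xR_y$ in the center, together with the basic commutator rules~\eqref{eq: RRL} and~\eqref{eq: RL-LL} that govern how $R$ and $L$ (hence $H$) operators pass one another. I would use~\eqref{soot 3R^2Rn}, which I will have just established, to trade $H_yH_z$ for a combination of $R_yR_z$, $[R_y,H_z]$, and lower-complexity terms; since $R$-words commute with everything up to the center and the cross terms are controlled by skew-symmetry (Lemma~\ref{lm opercosn}), the commutator with $R_x$ should collapse to $\approx0$.

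The main obstacle I anticipate is bookkeeping the $\approx$-relation correctly: unlike an honest equality, $\approx$ only allows appending or deleting an \emph{even} block of repeated $R$-operators on variables absent from the polynomial, so at each step I must verify that the operators I insert or cancel act on leading parts that genuinely do not involve the distinguished variables, and that the total degree stays above the threshold where Lemmas~\ref{lm espH},~\ref{lm approx} apply. In practice this means I would fix a generic high-degree word $(uv)w$ with $w\in{\mathfrak A}_s^*$, apply the operators on the right, and track everything modulo $\mathcal L$ via the congruence~\eqref{eq: HkequivRk}, invoking Lemma~\ref{lm approx} to discard any intermediate expression that is already $\approx0$. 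Provided the skew-symmetry and centrality reductions are applied consistently, both~\eqref{soot 3R^2Rn} and~\eqref{soot RH2} should drop out; the delicate point throughout is ensuring the parity and variable-disjointness conditions built into the definition of~$\approx$ are never silently violated.
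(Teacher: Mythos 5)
Your overall plan points in the right direction, but both halves contain gaps at the decisive steps. For \eqref{soot 3R^2Rn} you need $H_xR_y+H_yR_x\approx0$, and you attribute this to \eqref{eq: xyTxTy}. That relation says $(xy)T_xT_y\approx0$, i.e. the operators act on the product of the \emph{same} variables that index them; it says nothing about the operator $H_xR_y$ applied to an arbitrary element, and indeed neither $H_xR_y$ nor $H_yR_x$ vanishes individually. What does hold is that their \emph{sum} is $\approx0$: this is the linearization of $H_xR_x\approx0$, which is exactly what Lemma~\ref{lm opercosn} provides (almost all operators are skew-symmetric in all their variables, so the symmetric combination $H_xR_y+H_yR_x$ dies). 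The paper's proof of \eqref{soot 3R^2Rn} is precisely this one-line application of Lemma~\ref{lm opercosn} to \eqref{soot 3R2}; you mention skew-symmetry in your preamble but then invoke the wrong relation where it matters.

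For \eqref{soot RH2} your reduction via \eqref{soot 3R^2Rn} is correct as far as it goes: since $R_yR_z$ is central (Lemma~\ref{lm R2 centr}), trading $H_yH_z$ for $2\left[R_y,H_z\right]-3R_yR_z$ leaves $\left[R_x,H_yH_z\right]\approx2\bigl[R_x,\left[R_y,H_z\right]\bigr]$. But you stop there, asserting that the remainder ``should collapse'' under normal-form bookkeeping. It does not; the paper needs one more genuine idea. By Lemma~\ref{lm opercosn} the double commutator is skew-symmetric in $x$ and $y$, so $2\bigl[R_x,\left[R_y,H_z\right]\bigr]\approx\bigl[R_x,\left[R_y,H_z\right]\bigr]-\bigl[R_y,\left[R_x,H_z\right]\bigr]$, and the Jacobi identity converts this difference into $\bigl[H_z,\left[R_y,R_x\right]\bigr]$, which vanishes because $R_xR_y$ and $R_yR_x$ are central. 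The antisymmetrization in $x,y$ followed by the Jacobi identity is the heart of the argument and is absent from your sketch; without it you have only rewritten the commutator, not killed it.
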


\begin{proof}
By Lemma~\ref{lm opercosn},
relation~\eqref{soot 3R^2Rn} follows from~\eqref{soot 3R2}.
Using~\eqref{soot 3R^2Rn} and combining Lemmas~\ref{lm R2 centr},~\ref{lm opercosn} with
the Jacobian identity,
we obtain
\begin{multline*}
\left[R_x,H_yH_z\right]\approx 2\bigl[R_x,\left[R_y,H_z\right]\bigr]\approx\\
\approx\bigl[R_x,\left[R_y,H_z\right]\bigr]-\bigl[R_y,\left[R_x,H_z\right]\bigr]
=\bigl[H_z,\left[R_y,R_x\right]\bigr]=0.
\end{multline*}
\end{proof}

\begin{definition}
Let
$\mathcal I$
be an ideal of
${\mathfrak A}_{s}^*$.
For
$\xi,\eta\in{\mathfrak A}_{s}^*$
we write
$\xi\cong \eta \pmod{\mathcal I}$
if there is a
$\theta\in \mathcal I$
such that
$\xi-\eta\approx \theta$.
\end{definition}

\smallskip

Let
$\mathcal H_n$ $(n<s)$
be the
\textit{ideal of~$\mathfrak A_{s}^*$ generated by all the elements}
$\oprH\left(i_1,\dots,i_{n}\right)$.

\begin{proposition}
The algebra ${\mathfrak A}_{s}^*$ satisfies the relation
\begin{equation}\label{eq reduction to 2n}
\oprH\left(1,\dots,2t\right)\cong 0\pmod{\mathcal H_{2t+1}}.
\end{equation}
\end{proposition}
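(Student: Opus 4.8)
The plan is to exploit the fact that $\approx$ lets me append, at no cost, a block consisting of an even number of right multiplications in fresh variables, and then to convert that block into $H$-symbols by means of \eqref{soot 3^2Rn}, thereby raising the number of $H$-factors from $2t$ to at least $2t+1$. Concretely, since $\oprH(1,\dots,2t)$ does not involve the variables $x_a,x_b$, the very definition of $\approx$ gives
$$\oprH(1,\dots,2t)\approx\oprH(1,\dots,2t)R_{x_a}R_{x_b},$$
so it remains to show that the right-hand side is $\approx$ to an element of $\mathcal H_{2t+1}$; by the definition of $\cong$ this is precisely the assertion \eqref{eq reduction to 2n}.

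Rewriting the trailing block by \eqref{soot 3^2Rn}, that is $R_{x_a}R_{x_b}\approx\frac23\left[R_{x_a},H_{x_b}\right]-\frac13 H_{x_a}H_{x_b}$, and multiplying on the left by $\oprH(1,\dots,2t)$, I obtain three summands. Two of them already lie in $\mathcal H_{2t+1}$: the summand $-\frac13\oprH(1,\dots,2t)H_{x_a}H_{x_b}=-\frac13\oprH(1,\dots,2t,a,b)$ is a product of $2t+2$ symbols $H$, while the contribution $-\frac23\oprH(1,\dots,2t)H_{x_b}R_{x_a}=-\frac23\oprH(1,\dots,2t,b)R_{x_a}$ is a product of $2t+1$ symbols $H$ multiplied on the right by $R_{x_a}$.

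The only delicate summand is $\frac23\oprH(1,\dots,2t)R_{x_a}H_{x_b}$, and handling it is the crux. Here I would invoke \eqref{soot RH2}, which says that $R_{x_a}$ commutes modulo $\approx$ with every product $H_yH_z$. As $2t$ is even, $\oprH(1,\dots,2t)$ decomposes into $t$ consecutive blocks $H_{x_{2j-1}}H_{x_{2j}}$, so sliding $R_{x_a}$ leftwards past one block at a time (multiplying \eqref{soot RH2} on the left by the already-passed prefix) yields $\oprH(1,\dots,2t)R_{x_a}\approx R_{x_a}\oprH(1,\dots,2t)$, whence
$$\oprH(1,\dots,2t)R_{x_a}H_{x_b}\approx R_{x_a}\oprH(1,\dots,2t,b)\in\mathcal H_{2t+1}.$$
This is exactly where the parity hypothesis enters: an odd number of leading $H$-symbols would leave one factor unpaired and the commutation would break down. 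Collecting the three summands gives $\oprH(1,\dots,2t)\approx\theta$ with $\theta\in\mathcal H_{2t+1}$, as required. I expect the commutation step to be the only genuine obstacle, the remainder being bookkeeping on the number of $H$-factors; the one point to keep in check is that $\approx$ is being treated as a congruence compatible with left multiplication, which is exactly how it was already used in the proofs of Lemma~\ref{lm opercosn} and Proposition~\ref{predl soot RH2}.
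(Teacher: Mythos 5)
Your argument is correct and is essentially the paper's own proof: the author likewise appends $R_xR_y$ via $\approx$, converts it with~\eqref{soot 3R^2Rn} so that two of the resulting terms visibly land in $\mathcal H_{2t+1}$, and disposes of the remaining term $\oprH\left(1,\dots,2t\right)R_xH_y$ by sliding $R_x$ to the left through the even number of $H$-factors using~\eqref{soot RH2}. Your write-up only spells out the bookkeeping (the three summands and the block-by-block commutation) that the paper compresses into a single displayed chain.
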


\begin{proof}
We set
$\eta=\oprH\left(1,\dots,2t\right)$.
Applying~\eqref{soot 3R^2Rn} and~\eqref{soot RH2},
we have
$$
3\eta\approx3\eta R_xR_y\cong
2\eta R_x H_y
\approx
2R_x\eta H_y\cong0
\pmod{\mathcal H_{2t+1}}.
$$
\end{proof}

\section{Upper bound for the topological rank of~$\rl{s}$}%
\label{Sec:UpperBoundForTheTopologicalRank}

\begin{definition}
An
\textit{$n\text{-allotted}$ variety}
($1\leqslant n\leqslant s$)
is a subvariety
$\mathcal V$ of $\rl{s}$
such that the free
$\mathcal V\text{-algebra}$
on the set~$X$ of generators satisfies the relation
\begin{equation}\label{rel n-allot}
\varphi\left(x_1,\dots,x_{n+1}\right)\approx0,
\end{equation}
where
$$
\varphi\left(x_1,\dots,x_{n+1}\right)
=\left\{
\begin{aligned}
\Bigl[\bigl[\ldots\com{x_1}{x_2},\ldots,x_{n}\bigr],x_{n+1}\Bigr],
&\enskip\text{ if $n$ is even},\\
\Bigl[\bigl[\ldots\com{x_1x_2}{x_3},\ldots,x_{n}\bigr],x_{n+1}\Bigr],
&\enskip\text{ if $n$ is odd}.
\end{aligned}\right.
$$
\end{definition}
By definition, every
$1\text{-allotted}$
variety
$\mathfrak M$
is right nilpotent.
Moreover, applying Lemma~\ref{lm espH}, it is not hard to prove that
$\mathfrak M$ is nilpotent and, consequently,
$\mathrm{r_t}\left(\mathfrak M\right)=1$.
We also stress that the variety
$\rl{s}$
is
$s\text{-allotted}$:
for even $s$, it is clear by definition and,
for odd $s$, it follows from~\eqref{eq reduction to 2n}.

\medskip

Let
$\mathcal V$
be an
\textit{$n\text{-allotted}$ variety}
$(n\geqslant2)$,
$\mathcal A$
be the
\textit{free
$\mathcal V\text{-algebra}$ on the set $X$ of generators},
and
${\mathcal P}_{d,n}$
 $(d\geqslant3)$
be the
\textit{subspace of multilinear polynomials in
$\mathcal A$
on the variables
$x_1,\dots,x_d$.}
In order to avoid complicated formulas while writing down
the polynomials of
${\mathcal P}_{d,n}$
we omit the indices of variables
at the operator symbols
and assume them to be arranged at the ascending order.
For example, the notation
$w=(x_2x_5)H^2R^3$
means the monomial
$$
w=(x_2x_5)\oprH\left(1,3\right)\oprR\left(4,6,7\right).
$$

\begin{definition}
{\upshape
\textit{Regular words} are the polynomials of
${\mathcal P}_{d,n}$
of the following types:
\begin{align*}
1)&\enskip\,\jos{x_1}{x_i}H^{2j}R^{d-2j-2},\\
2)&\enskip\,\com{x_1}{x_i}H^{2j}R^{d-2j-2},\\
3)&\enskip\,\com{x_2}{x_3}H^{2j}R^{d-2j-2},\\
4)&\enskip\,\com{x_1}{x_2}H^{2k-1}R^{d-2k-1},
\end{align*}
where
$i=2,3,\dots,d;\,$
$j=0,1,\dots,t-1;\,$
$k=1,2,\dots, n-t-1;\,$
$t=\slch{n}{2}$.}
\end{definition}

\begin{lemma}\label{lm pravl slova}
Almost all polynomials of
$\bigcup_{d=3}^{\infty}{\mathcal P}_{d,n}$
are linear combinations of regular words.
\end{lemma}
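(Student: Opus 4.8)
The plan is to show that any multilinear monomial in $\mathcal{P}_{d,n}$ of sufficiently high degree can be rewritten, modulo $\approx$, as a linear combination of regular words. The starting point is Lemma~\ref{lm espH}, which tells us that every operator acting on $\mathcal{A}^2$ (of degree $\geqslant s$) is spanned by words of the form $\oprH(i_1,\dots,i_p)\oprR(j_1,\dots,j_q)$ with $p<s$. So a typical multilinear monomial has the shape $(x_ax_b)\oprH(\dots)\oprR(\dots)$, where the variables $x_a,x_b$ sitting outside together with the indices in the $H$- and $R$-blocks exhaust $\{1,\dots,d\}$. The goal is to normalize both the leading binary term and the operator block.

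First I would use Lemma~\ref{lm opercosn}, which says almost all operators are skew-symmetric in all their variables: this lets me sort the indices inside each of the $\oprH$ and $\oprR$ blocks into ascending order at the cost of a sign, justifying the convention that operator indices are arranged in ascending order. It also forces the $\oprR$-block to involve distinct variables (consistent with multilinearity) and, combined with the earlier relation $R_x^2=0$ from~\eqref{eq: RR}, keeps everything consistent. Next I would reduce the length of the $H$-block: relation~\eqref{eq reduction to 2n} (together with the $n$-allotted hypothesis~\eqref{rel n-allot}) bounds how long a run of $H$'s can be, pushing the $H$-degree down into the range $H^{2j}$ with $j\leqslant t-1$ for the even-exponent words and $H^{2k-1}$ with $k\leqslant n-t-1$ for the odd-exponent words, where $t=\slch{n}{2}$. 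Relation~\eqref{soot 3R^2Rn}, namely $3R_xR_y\approx 2[R_x,H_y]-H_xH_y$, is the key tool for trading an $R^2$ for commutators and an $H^2$, letting me convert between the $R$-heavy and $H$-heavy parts and thereby control the parity of the $H$-exponent.

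The heart of the argument is normalizing the leading binary factor $(x_ax_b)$ into one of the four prescribed forms: an associator $\jos{x_1}{x_i}$, or one of the commutators $\com{x_1}{x_i}$, $\com{x_2}{x_3}$, $\com{x_1}{x_2}$. Here I would invoke~\eqref{eq: xyTxTy}, $(xy)T_xT_y\approx0$, which kills any monomial in which both indices of the outer product reappear inside the operator block; this is what allows me to assume the outer factor always contains the variable $x_1$ (or the special low-index pair) and prevents redundant words. Writing a general product $x_ax_b = \tfrac12(\jos{x_a}{x_b}) + \tfrac12\com{x_a}{x_b}$ (legitimate since $\mathrm{char}(F)\neq2$) splits each monomial into a Jordan part and a Lie part, matching types~$1$ through~$4$; the Jordan part is symmetric and the Lie part antisymmetric, and~\eqref{soot RH2}, $[R_x,H_yH_z]\approx0$, lets me commute the outer $x_1$-dependence past the operator block to standardize which variable occupies the distinguished slot.

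The main obstacle I anticipate is the bookkeeping that ensures the four families exactly span the space with the stated index ranges — in particular showing that no further word types are needed and that the parity split of the $H$-exponent (even for types~$1$--$3$, odd for type~$4$) falls out cleanly from repeated application of~\eqref{soot 3R^2Rn} and~\eqref{eq reduction to 2n}. One must verify that the reduction terminates: each rewriting either lowers the $H$-degree, moves the monomial into $\mathcal{H}_{2t+1}$ (which vanishes once the $H$-block is long enough by the allotted condition), or lands in an already-regular form, so an induction on the $H$-degree together with the "almost all" threshold from Lemmas~\ref{lm approx} and~\ref{lm opercosn} closes the argument. The phrase "almost all" is essential, since every step is only valid modulo $\approx$ and for degree exceeding some bound depending on $s$ and $n$.
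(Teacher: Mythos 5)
Your outline assembles the right toolkit (Lemmas~\ref{lm espH}, \ref{lm opercosn}, relations \eqref{eq reduction to 2n}, \eqref{eq: xkub}, \eqref{eq: xyTxTy}, \eqref{soot 3R^2Rn}, \eqref{soot RH2}), and your first reduction --- to words $\jos{x_i}{x_j}H^{k}R^{d-k-2}$ and $\com{x_i}{x_j}H^{k}R^{d-k-2}$ with $k\leqslant 2t-1$, and then to leading factors containing $x_1$ (or the pair $x_2,x_3$) --- matches the paper. But one mechanism is misstated: relation \eqref{eq: xyTxTy} cannot act by ``killing monomials in which both outer indices reappear in the operator block,'' because the polynomials in ${\mathcal P}_{d,n}$ are multilinear and no variable ever reappears. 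What is actually used is the \emph{full linearization} of \eqref{eq: xkub} and \eqref{eq: xyTxTy}, namely $\jos{x}{y}T_z+\jos{y}{z}T_x+\jos{z}{x}T_y\approx0$ and $\com{x}{y}T_zT_t+\com{x}{t}T_zT_y+\com{z}{y}T_xT_t+\com{z}{t}T_xT_y\approx0$; these permute variables between the leading binary factor and the first operator slots, and that is how the distinguished variable $x_1$ is forced into the leading factor.

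More seriously, the genuinely hard part of the lemma --- why words with \emph{odd} $H$-exponent collapse into the single family $4)$, and why Jordan-type words with odd $H$-exponent disappear entirely --- is not covered by ``trading $R^2$ for $H^2$ via \eqref{soot 3R^2Rn} to control parity''; that relation does not by itself change the parity of the number of $H$'s adjacent to the leading factor. The paper needs two separate computations here: first, $\jos{x}{y}H_z=\jos{x}{y}z-\dva{z}{x}y-\dva{z}{y}x$ (a consequence of \eqref{linprav}), which together with \eqref{soot RH2} places every Jordan-type word of odd $H$-exponent in the span of types $1)$--$3)$; and second, a longer chain of identities showing that $\com{x}{y}H_y$ is congruent, modulo that span, to $\left(y^2R_x+\dva{x}{y}R_y\right)H_zH_u$, whence the odd-exponent commutator words are skew-symmetric in \emph{all} their variables modulo the span of types $1)$--$3)$ and therefore each is proportional to the single regular word $\com{x_1}{x_2}H^{2k-1}R^{d-2k-1}$ of type $4)$. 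Without a substitute for these two computations your rewriting process does not terminate in the stated four families, so the proposal has a real gap at exactly the step that makes the lemma nontrivial.
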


\begin{proof}
By Lemma~\ref{lm opercosn},
there is a degree
$d$
such that every monomial
$$
(x_1x_2)T_{3}\dots T_{d}\in{\mathcal P}_{d,n}
$$
is skew-symmetric w.r.t.
$x_3,\dots,x_d$.
Consequently, in view of Lemma~\ref{lm espH} and relation~\eqref{eq reduction to 2n},
${\mathcal P}_{d,n}$
can be spanned by the polynomials
$$
\jos{x_i}{x_j}H^{k}R^{d-k-2},\quad
\com{x_i}{x_j}H^{k}R^{d-k-2},
$$
where
$a\circ b=ab+ba$,
$1\leqslant i<j\leqslant d$,
and
$k=0,1,\dots,2t-1$.

Linearizing~\eqref{eq: xkub} and~\eqref{eq: xyTxTy}, we have
\begin{align*}
\jos{x}{y}T_z+\jos{y}{z}T_x+\jos{z}{x}T_y&\approx0,\\
\com{x}{y}T_zT_t+\com{x}{t}T_zT_y+\com{z}{y}T_xT_t+\com{z}{t}T_xT_y&\approx0.
\end{align*}
Applying these relations,
it is not hard to prove that
${\mathcal P}_{d,n}$
can be spanned by the polynomials:
\begin{align*}
1')&\enskip\jos{x_1}{x_i}H^{k}R^{d-k-2},\\
2')&\enskip\com{x_1}{x_i}H^{k}R^{d-k-2},\\
3')&\enskip\com{x_2}{x_3}H^{k}R^{d-k-2},
\end{align*}
where
$i=2,\dots,d$
and
$k=0,1,\dots,2t-1$.

By $\mathcal W$ denote the linear span of all regular words of types~$1)$--$3)$.
Note that the polynomials of types~$1')$--$3')$ lie in~$\mathcal W$ for even~$k$.
Let us verify that the polynomials of types~$1')$--$3')$ for odd~$k$
can be represented as linear combinations of regular words.

By virtue of~\eqref{linprav},
we have
$$
\jos{x}{y}H_z=\jos{x}{y}z-z\jos{x}{y}
=\jos{x}{y}z-\dva{z}{x}y-\dva{z}{y}x.
$$
Hence, in view of~\eqref{soot RH2},
every polynomial of type~$1')$ lie in~$\mathcal W$.
Further, using
Lemmas~\ref{lm R2 centr},~\ref{lm opercosn},
the partial linearization
$$
\dva{x}{y}y+\dva{y}{x}y+y^2x\approx0
$$
of~\eqref{eq: xkub},
identity~\eqref{linprav} and relation~\eqref{soot 3R^2Rn},
we get
\begin{multline*}
\com{x}{y}H_y\approx\com{x}{y}H_yR_zR_u
\approx\com{x}{y}R_zR_uH_y=\com{x}{y}R_yR_zH_u=\\
=\bigl(\dva{x}{y}y-\dva{y}{x}y\bigr)R_zH_u
\approx\bigl(2\dva{x}{y}y+y^2x\bigr)R_zH_u
=y^2\left(2L_x+R_x\right)R_zH_u=\\
=y^2\left(3R_x-2H_x\right)R_zH_u
\approx y^2\left(2\com{R_x}{H_z}-H_xH_z-2H_xR_z\right)H_u\approx\\
\approx y^2\left(2R_x-H_x\right)H_zH_u
= y^2\left(R_x+L_x\right)H_zH_u
=\left(y^2R_x+\dva{x}{y}R_y\right)H_zH_u.
\end{multline*}
In view of~\eqref{soot RH2} the obtained relation implies that
the polynomials of types~$2'),3')$ for odd~$k$
are skew-symmetric modulo~$\mathcal W$ with respect to all their variables.
Therefore every polynomial of type~$2'),3')$
is proportional  modulo~$\mathcal W$ to a regular word of type~$4)$.
\end{proof}

\begin{lemma}\label{lm 2n iz 2n+1}
For every
$n\text{-allotted}$
variety
$\mathcal V$
$(n\geqslant2)$
there is a punctured neighborhood
$\textstyle\stackrel{\circ}{\mathrm U}_d(\mathcal V)$
such that every variety of
$\textstyle\stackrel{\circ}{\mathrm U}_d(\mathcal V)$
is
$(n-1)\text{-allotted}$.
\end{lemma}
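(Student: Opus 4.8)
The plan is to show that if $\mathcal{V}$ is an $n$-allotted variety with $n\geqslant 2$, then there is a degree $d$ beyond which the regularity structure of Lemma~\ref{lm pravl slova} forces a dichotomy: a nearby subvariety either coincides with $\mathcal{V}$ near degree $d$ or is automatically $(n-1)$-allotted. First I would use Lemma~\ref{lm pravl slova} to fix a degree $d_0$ such that for all $d\geqslant d_0$ every multilinear polynomial in ${\mathcal P}_{d,n}$ is a linear combination of the regular words of types~$1)$--$4)$. The key observation is that the relation~\eqref{rel n-allot} defining $n$-allottedness, when multiplied by enough operators to reach high degree and reduced via~\eqref{eq: HkequivRk} and~\eqref{eq reduction to 2n}, corresponds precisely to the appearance of the top-level regular words of type~$4)$ with $k=n-t-1$ (the highest admissible power of $H$). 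In other words, $n$-allottedness is exactly the condition that these top regular words vanish $\approx 0$, while $(n-1)$-allottedness drops the power of the commutator-$H$ chain by one.

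Next I would set up the punctured neighborhood $\stackrel{\circ}{\mathrm U}_d(\mathcal V)$ for a suitable $d\geqslant d_0$ and take an arbitrary proper subvariety $\mathfrak M\subset\mathcal V$ lying in it, so that $\dm_{\mathcal V}\mathfrak M\geqslant d$. By definition of dimension, any system distinguishing $\mathfrak M$ from $\mathcal V$ must contain a polynomial of degree at least $d$; equivalently, every distinguishing identity of degree less than $d$ is already a consequence of the identities of $\mathcal V$. Thus $\mathfrak M$ introduces no new relations among polynomials of degree below $d$, but does introduce a nonzero homogeneous relation $f\approx 0$ of degree $\geqslant d$ in the free $\mathcal{V}$-algebra. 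By Lemma~\ref{lm pravl slova}, I would express this $f$ as a linear combination of regular words; by Lemma~\ref{lm approx}, once $f\approx 0$ holds, almost all operators annihilate $f$, so the relation propagates to all sufficiently high degrees. The point is to analyze which regular words can occur in $f$: since $\mathfrak M$ adds no low-degree relations, $f$ cannot be one of the "lower" regular words (types~$1)$--$3)$ or type~$4)$ with small $k$), because those connect to degree-lowering via the centrality of $R_xR_y$ (Lemma~\ref{lm R2 centr}) and would give a distinguishing identity of smaller degree, contradicting $\dm_{\mathcal V}\mathfrak M\geqslant d$.

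Therefore the new relation $f\approx 0$ must involve the top regular word of type~$4)$ with $k=n-t-1$. I would then show that the vanishing of this top word, combined with the relations~\eqref{soot 3R^2Rn},~\eqref{soot RH2} and~\eqref{eq reduction to 2n}, forces the $(n-1)$-allottedness identity $\varphi(x_1,\dots,x_n)\approx 0$ to hold in the free $\mathfrak M$-algebra. Concretely, the commutator $\varphi(x_1,\dots,x_{n+1})$ of $n$-allottedness and $\varphi(x_1,\dots,x_{n})$ of $(n-1)$-allottedness differ by one bracketing step, which on the operator side is the difference between the top $H$-power $H^{n-t-1}$ and the next one $H^{n-t-2}$ in the type~$4)$ words; killing the top word in high degree, via centrality of $R^2$ and the congruence~\eqref{eq reduction to 2n}, collapses the chain one level and yields the $(n-1)$-allottedness relation. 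The main obstacle I anticipate is the bookkeeping that identifies the precise regular word corresponding to the $n$-allottedness identity and verifies that it is genuinely the unique top word not reducible to lower degree: I must check carefully that no other regular word of comparable degree can serve as the distinguishing polynomial without already producing a lower-degree identity, and that the passage from "top word vanishes" to "$\varphi(x_1,\dots,x_n)\approx 0$" is reversible enough to conclude $(n-1)$-allottedness rather than some weaker condition. This parity-sensitive correspondence between the bracket length $n$ and the operator power through $t=\slch{n}{2}$ is where the argument is most delicate.
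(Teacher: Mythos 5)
There is a genuine gap at the central step of your argument. You propose to show that the new identity $f=0$ satisfied by a variety $\mathfrak M\in\stackrel{\circ}{\mathrm U}_d(\mathcal V)$ must involve the top regular word of type~$4)$, by excluding the ``lower'' regular words on the grounds that they ``would give a distinguishing identity of smaller degree, contradicting $\dm_{\mathcal V}\mathfrak M\geqslant d$.'' This exclusion is unsound: the trailing factors $R^{d-k-2}$ in a regular word cannot be stripped off. An identity of the form $g\,\oprR\left(j_1,\dots,j_{2k}\right)=0$ with $\deg g<d$ is, by definition, exactly the relation $g\approx 0$, which is strictly weaker than the identity $g=0$; centrality of $R_xR_y$ (Lemma~\ref{lm R2 centr}) lets you move such factors but not cancel them, so no distinguishing system of degree $<d$ is produced and nothing contradicts $\dm_{\mathcal V}\mathfrak M\geqslant d$. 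Nothing prevents $f$ from being, say, a combination of type~$1)$--$3)$ words only (the paper's own proof explicitly treats the cases where coefficients such as $\alpha_0,\beta_0,\gamma_0$ or $\varepsilon_1$ are nonzero, which your argument would rule out a priori). Also, your reading of the dimension is slightly off: $\dm_{\mathcal V}\mathfrak M\geqslant d$ means no system of degree $<d$ \emph{defines} $\mathfrak M$ over $\mathcal V$, not that $\mathfrak M$ satisfies no new identities of degree $<d$.

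Consequently the decisive mechanism is missing. The paper's proof does not identify which regular words occur in $f$; instead it shows that \emph{whichever} regular word carries a nonzero coefficient, a substitution $x_i:=aH^{p}$ with the power $p$ tuned to that word (together with $R_y+L_y=2R_y-H_y$, relations \eqref{soot 3R^2Rn}, \eqref{soot RH2}, and $\mathrm{char}(F)\neq 2$) already forces $aH^{2t}\approx0$, i.e., the $(n-1)$-allotted relation. Only when all those coefficients vanish does $f$ collapse to the single top word $\com{x_1}{x_2}H^{2t-1}R^{d-2t-1}$, whose coefficient is then nonzero by nontriviality of $f$, again yielding $\mathcal A H^{2t}\approx0$; and in the even case $n=2t$ one still needs the extra step from $\mathcal A^2H^{2t-1}\approx0$ to $\mathcal A^2H^{2t-2}\approx0$ via \eqref{eq reduction to 2n}, which your sketch does not address. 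The final part of your proposal (vanishing of the top type-$4)$ word gives $(n-1)$-allottedness) is correct as far as it goes, but without the substitution argument the reduction to that case has no proof, so the lemma is not established.
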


\begin{proof}
By virtue of the restriction
$\mathrm{char}(F)\neq 2$,
Lemma~\ref{lm opercosn} and relation~\eqref{eq: xkub} imply that in some punctured neighborhood of
$\mathcal V$
every variety can be defined by a system of identities
where all polynomials starting from some sufficiently high degree are multilinear.
Consequently by Lemma~\ref{lm pravl slova}, we can choose a punctured neighborhood
$\textstyle\stackrel{\circ}{\mathrm U}_d(\mathcal V)$
such that every variety
$\mathfrak M\in\textstyle\stackrel{\circ}{\mathrm U}_d(\mathcal V)$
satisfies an identity
$f=0$,
where
$f$
is a nontrivial linear combination of regular words of
${\mathcal P}_{d,n}$.
Let
$\mathcal A$
be the free
$\mathfrak M\text{-algebra}$ on the set $X$ of generators.
We write down relation~\eqref{rel n-allot} shortly as
$\mathcal A^2 H^{2t}\approx0$
if
$n=2t+1$
and as
$\mathcal A H^{2t}\approx0$
if
$n=2t$.

First consider the case $n=2t+1$.
We prove that relation
$\mathcal A^2 H^{2t}\approx0$
and identity
$f=0$
imply
$\mathcal A H^{2t}\approx0$.
By Lemma~\ref{lm pravl slova}, $f$ can be presented in the form
$$
f\equiv\sum_{i=2}^d\sum_{j=0}^{t-1}
\Bigl(\alpha^{(i)}_{2j}\jos{x_1}{x_i}H^{2j}R^{d-2j-2}
+\alpha^{(i)}_{2j+1}\com{x_1}{x_i}H^{2j}R^{d-2j-2}
\Bigr)\pmod{\mathcal W_{3,4}},
$$
where
$\mathcal W_{3,4}$
is the linear span of regular words of types~$3),4)$.
We fix
$i\geqslant4$
and a minimal index
$\ell$
such that
$\alpha^{(i)}_\ell\neq0$.
Then by the substitution
$x_i:=aH^{2t-\ell}$,
for
$a\in\mathcal A$,
using the equality
$R_y+L_y=2R_y-H_y$ and relation~\eqref{soot RH2}, we obtain
$aH^{2t}\approx0$.
Otherwise, we can rewrite $f$ in the form
$$
f=\sum_{k=0}^{t-1}g_k+\sum_{k=1}^{t}h_k,
$$
where
\begin{align*}
g_0&=\Bigl(\alpha_0\com{x_1}{x_2}x_3
+\beta_0\com{x_3}{x_1}x_2
+\gamma_0\com{x_2}{x_3}x_1\Bigr)R^{d-3},\\
h_t&=
\zeta_t\com{x_1}{x_2}H^{2t-1}R^{d-2k-1},\\
\intertext{and}
g_k&=
\Bigl(
\alpha_{k}\bigl[\com{x_1}{x_2},x_3\bigr]
+\beta_{k}\bigl[\com{x_3}{x_1},x_2\bigr]
+\gamma_{k}\bigl[\com{x_2}{x_3},x_1\bigr]\Bigr)H^{2k-1}R^{d-2k-2},\\
h_k&=
\delta_{k}\jos{x_1}{x_2}H^{2k}R^{d-2k-2}
+\varepsilon_{k}\jos{x_1}{x_3}H^{2k}R^{d-2k-2}+
\zeta_k\com{x_1}{x_2}H^{2k-1}R^{d-2k-1},
\end{align*}
for $k=1,\dots,t-1$.
If at least one of the coefficients $\alpha_0,\beta_0,\gamma_0$
is not zero, then by three successive substitutions
$x_i:=aH^{2t-1}$ $(i=1,2,3)$,
we have
$$
\left\{
\begin{aligned}
\left(\alpha_0+\beta_0\right)aH^{2t}&\approx0,\\
\left(\alpha_0+\gamma_0\right)aH^{2t}&\approx0,\\
\left(\beta_0+\gamma_0\right)aH^{2t}&\approx0.
\end{aligned}\right.
$$
Hence
in view of the restriction
$\mathrm{char}(F)\neq 2$,
we obtain either
$aH^{2t}\approx0$
or~$g_0=0$.
Further, if
$\varepsilon_1\neq0$,
then by the substitution
$x_3:=aH^{2t-2}$,
we have $aH^{2t}\approx0$.
Otherwise, if at least one of the coefficients
$\delta_1$ or $\zeta_1$ is not zero, by two successive substitutions
$x_i:=aH^{2t-2}$ $(i=1,2)$, we obtain
$$
\left\{
\begin{aligned}
\left(2\delta_1+\zeta_1\right)aH^{2t}&\approx0,\\
\left(2\delta_1-\zeta_1\right)aH^{2t}&\approx0.
\end{aligned}\right.
$$
Thus we have either
$aH^{2t}\approx0$
or
$h_1=0$ and, consequently,
$f$
gets the form
$$
f=\sum_{k=1}^{t-1}g_k+\sum_{k=2}^{t}h_k.
$$
Therefore by the same arguments as above,
we obtain either
$aH^{2t}\approx0$ or
$$
g_1=h_2=\dots=g_{t-2}=h_{t-1}=g_{t-1}=0
$$
and
$$
f=h_t=\zeta_{t}\com{x_1}{x_2}H^{2t-1}R^{d-2t-1}.
$$
But in this case, the assumption of the lemma implies
$\zeta_{t}\neq0$
and, consequently,
$\mathcal A H^{2t}\approx0$.

Now consider the case
$n=2t$.
We need to prove that relation
$\mathcal A H^{2t}\approx0$
and identity
$f=0$
imply
$\mathcal A^2 H^{2t-2}\approx0$.
Unlike the case
$n=2t+1$,
the regular word of type~$4)$ corresponding to the index~$k=t$
vanishes.
All the other regular words are the same.
Therefore by the similar arguments as above,
reducing per unit the power
$p(t)$
for every substitution
$x_i:=aH^{p(t)}$
and assuming
$a\in\mathcal A^2$,
one can prove that
$\mathcal A^2 H^{2t-1}\approx0$.
By~\eqref{eq reduction to 2n}, the obtained relation yields
$\mathcal A^2 H^{2t-2}\approx0$.
\end{proof}

As it was stressed above, every
$1\text{-allotted}$
variety has the topological rank~$1$.
Consequently
Lemma~\ref{lm 2n iz 2n+1} implies that the topological rank of every
$n\text{-allotted}$ variety is not more than~$n$.
In particular,
$\mathrm{r_t}\bigl(\rl{s}\bigr)\leqslant s$.

\section{Low bound for the topological rank of~$\rl{s}$}%
\label{Sec:LowBoundForTheTopologicalRank}

Let
$\mathcal A=\mathcal A_0\oplus\mathcal A_1$
be a \textit{superalgebra}
($\mathbb Z_2\text{-graded algebra}$) with the
\textit{even part $\mathcal A_0$}
and the
\textit{odd part $\mathcal A_1$}, i.~e.
$\mathcal A_i\mathcal A_j\subseteq\mathcal A_{i+j\;\,(\mathrm{mod} 2)}$
for
$i,j\in\{0,1\};$
${\mathrm G}$
be the \textit{Grassmann algebra} on a countable set of anticommuting generators
$\left\{e_1,e_2,\ldots\mid e_ie_j=-e_je_i\right\}$
with the natural $\mathbb Z_2\text{-grading}$
($\mathrm G_0$ and $\mathrm G_1$ are spanned by the words of even and, respectively, odd length on $\left\{e_i\right\}$).
The \textit{Grassmann envelope}
${\mathrm G}\left(\mathcal A\right)$
of
$\mathcal A$ is the subalgebra
$
{\mathrm G_0}\otimes \mathcal A_0+{\mathrm G_1}\otimes \mathcal A_1
$
of the tensor product
${\mathrm G}\otimes \mathcal A$.
It is well known that
${\mathrm G}\left(\mathcal A\right)$
satisfies a multilinear identity
$f=0$
iff
$\mathcal A$
satisfies the certain graded identity
$\Tilde{f}=0$
called the
\textit{superization of~$f=0$.}
Here
$\Tilde{f}$ denotes the so-called
\textit{superpolynomial corresponding to $f$} and
we also say that
$\mathcal A$ satisfies the \textit{superidentity}
$\Tilde{f}=0$.
The descriptions of the process of constructing of superpolynomials
(the \textit{superizing process})
can be found in~\cite{Zelmanov-Shestakov90}--\cite{Vaughan-Lee98}.
For a given variety
$\mathcal V$
of algebras,
$\mathcal A$ is said to be a
$\mathcal V\textit{-superalgebra}$ if
${\mathrm G}\left(\mathcal A\right)\in\mathcal V$,
i.~e. if
$\mathcal A$
satisfies all the superizations of the defining identities of~$\mathcal V$.

\smallskip

Let
$\varepsilon$
be one of the elements
$0,1\in F$
and
${\mathcal A}^{(\varepsilon)}
={\mathcal A}^{(\varepsilon)}_0\oplus {\mathcal A}^{(\varepsilon)}_1
$
be a superalgebra with the countable basis
$x$, $a_{i,j}$ $(i,j=0,1,\dots)$
such that
$x\in{\mathcal A}^{(\varepsilon)}_1$
and
$a_{i,j}\in{\mathcal A}^{(\varepsilon)}_0$
iff
$i\equiv j\pmod{2}$.
We introduce the multiplication of the basis elements of
${\mathcal A}^{(\varepsilon)}$
as follows:
\begin{gather*}
x^2=\frac{\varepsilon}{2} a_{0,0},\quad
a_{i,j}\cdot x=a_{i,\,j+1},\quad
x\cdot a_{i,\,2j}={(-1)}^{i}\left(a_{i,\,2j+1}-a_{i+1,\,2j}\right),\\
x\cdot a_{i,\,2j+1}=\frac{{(-1)}^{i}}{2}\left(a_{i,\,2j+2}-2a_{i+1,\,2j+1}+a_{i+2,\,2j}\right),
\end{gather*}
and all the other products are zero.
By definition, it is not hard to see that
${\mathcal A}^{(\varepsilon)}$
is a metabelian algebra.

\begin{lemma}\label{predl GApravH}
The algebra
${\mathcal A}^{(\varepsilon)}$ is an $\mathrm{RA}_2\text{-superalgebra}$.
\end{lemma}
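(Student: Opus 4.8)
The plan is to show directly that the Grassmann envelope $\Gob{\mathcal{A}^{(\varepsilon)}}$ satisfies both defining identities \eqref{linprav} and \eqref{metab} of $\mathrm{RA}_2$, which by the equivalence quoted above amounts to checking that $\mathcal{A}^{(\varepsilon)}$ satisfies their superizations. Writing $\md{u}$ for the parity of a homogeneous element $u$, the superization of the metabelian identity \eqref{metab} is $\dva{u}{v}\dva{w}{t}=0$ (the single monomial carries no essential sign), and this holds for a purely structural reason: every product of two basis elements lies in the span of the $a_{i,j}$, while the product of any two of the $a_{i,j}$ vanishes, so $\left(\mathcal{A}^{(\varepsilon)}\right)^2\left(\mathcal{A}^{(\varepsilon)}\right)^2=0$. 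This is exactly the metabelianness already observed in the text.

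The real content is the superization of the right alternative identity \eqref{linprav}. Factoring out the Grassmann part of each monomial shows that it reads
\[
\ass{u}{v}{w}+{(-1)}^{\md{v}\md{w}}\ass{u}{w}{v}=0 ,
\]
the sign arising from commuting the two Grassmann factors attached to $v$ and $w$. Since the associator is trilinear, I would verify this on homogeneous basis elements $u,v,w\in\{x\}\cup\{a_{i,j}\}$, organizing the check by the number of arguments equal to the odd generator $x$. If at most one of $u,v,w$ equals $x$, then each associator contains an inner product $\dva{a_{i,j}}{a_{k,\ell}}=0$, so both associators vanish. If all three equal $x$, or if $v=w=x$, then the two associators coincide while the sign is ${(-1)}^{\md{v}\md{w}}={(-1)}^{1}=-1$, so the left-hand side cancels identically.

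This leaves the one substantive case: exactly two arguments equal $x$, occupying the first two or the first and third slots. By the symmetry of the relation both reduce to establishing
\[
\ass{x}{x}{a_{p,q}}+{(-1)}^{\md{a_{p,q}}}\ass{x}{a_{p,q}}{x}=0
\]
for every $a_{p,q}$. Reading off the table, $\dva{x}{x}a_{p,q}=\frac{\varepsilon}{2}a_{0,0}\cdot a_{p,q}=0$, so $\ass{x}{x}{a_{p,q}}=-x\dva{x}{a_{p,q}}$, and I would compute the nested left multiplication $x\dva{x}{a_{p,q}}=\frac{1}{2}\left(a_{p,q+2}-a_{p+2,q}\right)$; the two defining formulas for $x\cdot a_{i,2j}$ and $x\cdot a_{i,2j+1}$ have to be handled separately but collapse to the same expression. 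A parallel computation gives $\ass{x}{a_{p,q}}{x}={(-1)}^{p+q}\frac{1}{2}\left(a_{p,q+2}-a_{p+2,q}\right)$, and since $a_{p,q}$ is even precisely when $p\equiv q\pmod 2$ we have ${(-1)}^{\md{a_{p,q}}}={(-1)}^{p+q}$, which yields the exact cancellation.

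The main obstacle is entirely the sign bookkeeping in this last case: one must carry the factors ${(-1)}^{i}$ from the multiplication rules through a doubly nested application of left multiplication by $x$ and confirm that the surviving monomials are exactly those cancelled by the parity sign of the superidentity. Everything else is absorbed by the metabelian structure, so the lemma reduces to this single family of finite, if delicate, computations.
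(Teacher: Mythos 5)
Your proposal is correct and follows essentially the same route as the paper: both reduce, via metabelianness, to the single superidentity $\ass{x}{x}{a_{i,j}}+{(-1)}^{i+j}\ass{x}{a_{i,j}}{x}=0$, which in operator form is exactly the paper's relation $a_{i,j}\bigl(\com{L_x}{R_x}-{(-1)}^{i+j}L^2_x\bigr)=0$, and then verify it by splitting into the cases of even and odd $j$. The values you assert for the nested products, $a_{i,j}L_x^2=\tfrac12\left(a_{i,j+2}-a_{i+2,j}\right)$ and $a_{i,j}\com{L_x}{R_x}={(-1)}^{i+j}\tfrac12\left(a_{i,j+2}-a_{i+2,j}\right)$, agree with the paper's explicit computations, so the sketched final step fills in exactly as claimed.
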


\begin{proof}
We check that
${\mathcal A}^{(\varepsilon)}$
satisfies  the superization of~\eqref{linprav}:
$$
\ass{a}{b}{c}+{\left(-1\right)}^{\md{b}\md{c}}\ass{a}{c}{b}=0,
$$
where
$a,b,c$
are homogeneous basis elements of
${\mathcal A}^{(\varepsilon)}$
and
$\left|a\right|$
denotes the parity of~$a$,
i.~e.
$\left|a\right|=k$
for
$a\in{\mathcal A}^{(\varepsilon)}_k\;(k=0,1)$.
In view of metability of
${\mathcal A}^{(\varepsilon)}$
and the odd parity of~$x$,
it suffices to verify the relation
\begin{equation}\label{soot provLR L2}
a_{i,j}\bigl(\com{L_x}{R_x}-{(-1)}^{i+j}L^2_x\bigr)=0.
\end{equation}
First for even $j$, we calculate
\begin{multline*}
a_{i,j}\com{L_x}{R_x}
={(-1)}^{i}\left(
a_{i,\,j+1}-a_{i+1,\,j}\right)R_x-a_{i,\,j+1}L_x=\\
={(-1)}^{i}\left(a_{i,\,j+2}-a_{i+1,\,j+1}\right)
-\frac{{(-1)}^{i}}{2}\left(a_{i,\,j+2}-2a_{i+1,\,j+1}+a_{i+2,\,j}\right)=\\
\shoveright{=\frac{{(-1)}^{i}}{2}\left(
a_{i,\,j+2}-a_{i+2,\,j}
\right);}\\
\shoveleft{a_{i,j}\,L^2_x
={(-1)}^{i}\left(a_{i,\,j+1}-a_{i+1,\,j}\right)L_x=}\\
=\frac{1}{2}\left(a_{i,\,j+2}-2a_{i+1,\,j+1}+a_{i+2,\,j}\right)+a_{i+1,\,j+1}-a_{i+2,\,j}=\\
=\frac{1}{2}\left(
a_{i,\,j+2}-a_{i+2,\,j}
\right).
\end{multline*}
To conclude the proof it remains to make the similar calculations for odd~$j$:
\begin{multline*}
a_{i,j}\com{L_x}{R_x}
=\frac{{(-1)}^{i}}{2}\left(a_{i,\,j+1}-2a_{i+1,\,j}+a_{i+2,\,j-1}\right)R_x-a_{i,\,j+1}L_x=\\
=\frac{{(-1)}^{i}}{2}\left(a_{i,\,j+2}-2a_{i+1,\,j+1}+a_{i+2,\,j}\right)
+{(-1)}^{i+1}\left(a_{i,\,j+2}-a_{i+1,\,j+1}\right)=\\
=\frac{{(-1)}^{i+1}}{2}\left(a_{i,\,j+2}-a_{i+2,\,j}\right);
\end{multline*}
\begin{multline*}
a_{i,j}\,L^2_x
=\frac{{(-1)}^{i}}{2}\left(a_{i,\,j+1}-2a_{i+1,\,j}+a_{i+2,\,j-1}\right)L_x=\\
=\frac{1}{2}\left(
a_{i,\,j+2}-a_{i+1,\,j+1}
+a_{i+1,\,j+1}
-2a_{i+2,\,j}+a_{i+3,\,j-1}+a_{i+2,\,j}-a_{i+3,\,j-1}\right)=\\
=\frac{1}{2}\left(a_{i,\,j+2}-a_{i+2,\,j}\right).
\end{multline*}
\end{proof}

\begin{proposition}\label{predl supercom}
The algebra ${\mathcal A}^{(\varepsilon)}$ satisfies the relations
\begin{align}
\label{soot supercom}
{\left[a_{i,\,2j},x\right]}_{\mathrm s}&=a_{i+1,2j},\\
\label{soot supercom2}
{\bigl[{\left[a_{i,j},x\right]}_{\mathrm s},x\bigr]}_{\mathrm s}&=a_{i+2,\,j},
\end{align}
where ${\left[a,b\right]}_{\mathrm s}=ab-{(-1)}^{\md{a}\md{b}}ba$ is a supercommutator
of the elements~$a,b$.
\end{proposition}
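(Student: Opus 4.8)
The plan is to verify both identities by a direct computation from the multiplication table of ${\mathcal A}^{(\varepsilon)}$, using throughout that each $a_{i,j}$ is homogeneous with parity $\md{a_{i,j}}\equiv i+j\pmod 2$; in particular $\md{a_{i,2j}}\equiv i\pmod 2$ and $\md{x}=1$. Since the supercommutator signs are the only subtle ingredient, the proof reduces to careful bookkeeping.

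For~\eqref{soot supercom} I expand the supercommutator of the even-second-index element $a_{i,2j}$ (of parity $i\bmod 2$) as
$$
{\left[a_{i,2j},x\right]}_{\mathrm s}=a_{i,2j}\cdot x-{(-1)}^{i}\,x\cdot a_{i,2j},
$$
and substitute $a_{i,2j}\cdot x=a_{i,2j+1}$ together with $x\cdot a_{i,2j}={(-1)}^{i}\left(a_{i,2j+1}-a_{i+1,2j}\right)$. The factor ${(-1)}^{i}\cdot{(-1)}^{i}=1$ makes the two copies of $a_{i,2j+1}$ cancel, leaving exactly $a_{i+1,2j}$.

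For~\eqref{soot supercom2} I split according to the parity of~$j$. If $j$ is even, then the inner supercommutator produces $a_{i+1,j}$ by~\eqref{soot supercom}, whose second index is still even, so a second application of~\eqref{soot supercom} immediately gives $a_{i+2,j}$. If $j=2k+1$ is odd, I compute the inner supercommutator directly, now using the odd-second-index rule $x\cdot a_{i,2k+1}=\frac{{(-1)}^{i}}{2}\left(a_{i,2k+2}-2a_{i+1,2k+1}+a_{i+2,2k}\right)$ and the sign ${(-1)}^{\md{a_{i,2k+1}}}={(-1)}^{i+1}$; this yields the homogeneous element
$$
c:={\left[a_{i,2k+1},x\right]}_{\mathrm s}=\frac{3}{2}a_{i,2k+2}-a_{i+1,2k+1}+\frac{1}{2}a_{i+2,2k},
$$
all of whose terms have parity $i\bmod 2$.

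It then remains to supercommute $c$ with $x$ once more, that is, to form $c\cdot x-{(-1)}^{i}\,x\cdot c$. Right multiplication by $x$ raises the second index of each term by one, while left multiplication is governed by the even- and odd-second-index rules; after factoring out the common ${(-1)}^{i}$ one collects terms. The decisive point is that the $a_{i,\cdot}$- and $a_{i+1,\cdot}$-contributions of $c\cdot x$ and of ${(-1)}^{i}\,x\cdot c$ agree and cancel in the difference, whereas the two $a_{i+2,2k+1}$-contributions enter with opposite signs inside the subtraction and therefore add, producing precisely $a_{i+2,2k+1}=a_{i+2,j}$. I expect the only real obstacle to be this sign and coefficient bookkeeping: the factor ${(-1)}^{i}$ threads simultaneously through the product rules and through the supercommutator, and one must check that the coefficients $\frac{3}{2},-1,\frac{1}{2}$ recombine so that every term except $a_{i+2,j}$ cancels.
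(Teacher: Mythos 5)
Your proof is correct and follows essentially the same route as the paper: a direct verification from the multiplication table, proving \eqref{soot supercom} by cancellation of the $a_{i,\,2j+1}$ terms, handling \eqref{soot supercom2} for even $j$ by two applications of \eqref{soot supercom}, and for odd $j$ by first computing the inner bracket $c=\frac{3}{2}a_{i,\,j+1}-a_{i+1,\,j}+\frac{1}{2}a_{i+2,\,j-1}$ and then bracketing with $x$ once more. The only cosmetic difference is that the paper evaluates $[c,x]_{\mathrm s}$ termwise by reusing \eqref{soot supercom} and the inner-bracket formula, whereas you expand $c\cdot x-(-1)^{i}\,x\cdot c$ directly from the multiplication table; the cancellation pattern you describe (the $a_{i,\,\cdot}$ and $a_{i+1,\,\cdot}$ terms cancel, the two $\frac{1}{2}a_{i+2,\,j}$ contributions add) does check out.
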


\begin{proof}
First we calculate
$$
{\left[a_{i,\,2j},x\right]}_{\mathrm s}
=a_{i,\,2j}\,R_x-{(-1)}^{i} a_{i,\,2j}\,L_x
=a_{i,\,2j+1}-\left(a_{i,\,2j+1}-a_{i+1,\,2j}\right)=a_{i+1,\,2j}.
$$
Thus~\eqref{soot supercom} and, consequently,~\eqref{soot supercom2}, for even~$j$, are proved.
Further, for odd~$j$, we have
\begin{multline*}
{\left[a_{i,j},x\right]}_{\mathrm s}
=a_{i,j}\,R_x+{(-1)}^{i} a_{i,j}\,L_x
=a_{i,\,j+1}+\frac{1}{2}\left(a_{i,\,j+1}-2a_{i+1,\,j}+a_{i+2,\,j-1}\right)=\\
=\frac{3}{2}a_{i,\,j+1}-a_{i+1,\,j}+\frac{1}{2}a_{i+2,\,j-1}.
\end{multline*}
Finally, combining the obtained relation with~\eqref{soot supercom}, we get
\begin{multline*}
{\bigl[{\left[a_{i,j},x\right]}_{\mathrm s},x\bigr]}_{\mathrm s}
=\frac{1}{2}{\bigl[\left(3a_{i,\,j+1}-2a_{i+1,\,j}+a_{i+2,\,j-1}\right),x\bigr]}_{\mathrm s}=\\
=\frac{1}{2}\left(3a_{i+1,\,j+1}-3a_{i+1,\,j+1}+2a_{i+2,\,j}-a_{i+3,\,j-1}+a_{i+3,\,j-1}\right)=a_{i+2,\,j}.
\end{multline*}
\end{proof}

\medskip

Let
${\mathcal I}^{(k)}$
$(k\in\mathbb N)$
be the span of all elements
$a_{i,j}\in{\mathcal A}^{(\varepsilon)}$
such that
$i\geqslant 2k$.
By the definition of multiplication in
${\mathcal A}^{(\varepsilon)}$,
every nonzero product
$a_{i,j}T_x$
is a linear combination of elements
$a_{i',j'}$
such that
$i'\geqslant i$.
Consequently,
${\mathcal I}^{(k)}$
is an ideal of~${\mathcal A}^{(\varepsilon)}$.
For every natural
$n\geqslant 2$,
we introduce the quotient superalgebra
${\mathcal A}^{\left<n\right>}$
as follows:
$$
{\mathcal A}^{\left<2k\right>}=\factor{{\mathcal A}^{(0)}}{{\mathcal I}^{(k)}},\qquad
{\mathcal A}^{\left<2k+1\right>}=\factor{{\mathcal A}^{(1)}}{{\mathcal I}^{(k)}}.
$$

\begin{lemma}\label{lm vGobAN}
The algebra
${\mathcal A}^{\left<n\right>}$ is an $\rl{n}\text{-superalgebra}$.
\end{lemma}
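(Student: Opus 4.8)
The plan is to split the verification into the two defining features of $\rl{n}$. First, ${\mathcal A}^{\left<n\right>}$ is by construction the quotient of ${\mathcal A}^{(\varepsilon)}$ (with $\varepsilon=0$ for even $n$ and $\varepsilon=1$ for odd $n$) by the homogeneous ideal ${\mathcal I}^{(k)}$, where $k=\lfloor n/2\rfloor$, so its Grassmann envelope $\Gob{{\mathcal A}^{\left<n\right>}}$ is a homomorphic image of $\Gob{{\mathcal A}^{(\varepsilon)}}$. By Lemma~\ref{predl GApravH} the latter lies in $\mathrm{RA}_2$, and a variety is closed under homomorphic images, so $\Gob{{\mathcal A}^{\left<n\right>}}\in\mathrm{RA}_2$; equivalently, ${\mathcal A}^{\left<n\right>}$ satisfies the superizations of the right alternative identity~\eqref{linprav} and the metabelian identity~\eqref{metab}. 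It therefore remains only to show that ${\mathcal A}^{\left<n\right>}$ satisfies the superization of the Lie-nilpotency identity~\eqref{eq Lie-nilp} of step $n$, i.e. that the iterated supercommutator
$$
\Bigl[\bigl[\ldots[z_1,z_2]_{\mathrm s},\ldots,z_n\bigr]_{\mathrm s},z_{n+1}\Bigr]_{\mathrm s}
$$
vanishes for every substitution of homogeneous basis elements $z_1,\dots,z_{n+1}\in\{x\}\cup\{a_{i,j}\}$; multilinearity reduces the check to such substitutions.

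The second step is to record the elementary products on which the argument rests and to cut down the number of substitutions to consider. Since every product $a_{i,j}a_{i',j'}$ is zero, any supercommutator of two elements of the linear span $\mathcal S$ of the $a_{i,j}$ vanishes; in particular $[a_{i,j},a_{i',j'}]_{\mathrm s}=0$. On the other hand each of $[a_{i,j},x]_{\mathrm s}$, $[x,a_{i,j}]_{\mathrm s}$ and $[x,x]_{\mathrm s}=2x^2=\varepsilon a_{0,0}$ again lies in $\mathcal S$. Hence, once the innermost bracket $[z_1,z_2]_{\mathrm s}$ has been formed, the running value stays inside $\mathcal S$, and bracketing it against a further variable of type $a_{i,j}$ annihilates it. Thus the iterated supercommutator can be nonzero only when $z_3=\dots=z_{n+1}=x$ and not both of $z_1,z_2$ are of type $a_{i,j}$, leaving exactly two cases.

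In the first case one of $z_1,z_2$ is some $a_{i,j}$ and all the other variables are $x$; up to the antisymmetry of the supercommutator this is a chain of $n$ supercommutators of $a_{i,j}$ against $x$. Grouping the brackets in consecutive pairs and applying~\eqref{soot supercom2}, every pair raises the first index by two, so after $n$ brackets each surviving summand has first index at least $i+2\lfloor n/2\rfloor\geqslant 2k$; all such $a_{i',j'}$ lie in ${\mathcal I}^{(k)}$ and vanish in the quotient. In the second case $z_1=\dots=z_{n+1}=x$; here $[x,x]_{\mathrm s}=\varepsilon a_{0,0}$, which is already $0$ for even $n$, while for odd $n=2k+1$ the remaining $n-1=2k$ brackets against $x$ carry $a_{0,0}$ to $a_{2k,0}$ by~\eqref{soot supercom2}, again an element of ${\mathcal I}^{(k)}$. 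In both cases the superized identity holds.

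I expect the only genuine subtlety to be the index bookkeeping in the first case. A single bracket $[a_{i,j},x]_{\mathrm s}$ is clean only for even second index; for odd second index it is the three-term combination computed in the proof of Proposition~\ref{predl supercom}, so one cannot track indices one bracket at a time. This is precisely why I would work with the double-bracket relation~\eqref{soot supercom2}, which shifts the first index by exactly two regardless of the second index, making the estimate uniform. The numerology then matches exactly: the threshold $2k=2\lfloor n/2\rfloor$ defining ${\mathcal I}^{(k)}$ is reached after $n$ brackets in the first case and after $n-1$ brackets (following $[x,x]_{\mathrm s}=a_{0,0}$) in the second, which is what pins the Lie-nilpotency step at $n$.
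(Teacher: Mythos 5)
Your proposal is correct and follows essentially the same route as the paper: invoke Lemma~\ref{predl GApravH} (passed to the quotient) for the $\mathrm{RA}_2$ superidentities, use the vanishing of all products among the $a_{i,j}$ to reduce the Lie-nilpotency superidentity to iterated supercommutators against $x$, and then apply~\eqref{soot supercom2} to land in ${\mathcal I}^{(k)}$. You merely spell out more explicitly the case analysis and the index bookkeeping that the paper leaves implicit.
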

\begin{proof}
Taking into account Lemma~\ref{predl GApravH}, it suffices to prove that
${\mathcal A}^{\left<n\right>}$
satisfies the superization of~\eqref{eq Lie-nilp}.
By virtue of metability of
${\mathcal A}^{\left<n\right>}$,
we need to verify the relation
$$
\underbrace{\bigl[\ldots\bigl[[}_{n}{\mathcal A}^{\left<n\right>},{x]}_{\mathrm s},x\bigr]_{\mathrm s},\ldots,x\bigr]_{\mathrm s}=0.
$$
By the definition of ${\mathcal A}^{\left<n\right>}$,
using~\eqref{soot supercom2},
for
$k=\slch{n}{2}$,
we have
$$
\underbrace{\bigl[\ldots\bigl[[}_{2k }a_{i,j},{x]}_{\mathrm s},x\bigr]_{\mathrm s},\ldots,x\bigr]_{\mathrm s}
=a_{i+2k,\,j}=0.
$$
Thus the required relation is proved for even~$n$.
In the case of odd~$n$,
it remains to check the following:
$$
\underbrace{\bigl[\ldots\bigl[[}_{2k+1}x,{x]}_{\mathrm s},x\bigr]_{\mathrm s},\ldots,x\bigr]_{\mathrm s}
=\underbrace{\bigl[\ldots\bigl[[}_{2k}a_{0,0},{x]}_{\mathrm s},x\bigr]_{\mathrm s},\ldots,x\bigr]_{\mathrm s}
=a_{i+2k,\,j}=0.
$$
\end{proof}

\begin{lemma}\label{lm vGobAN-1}
The variety
$\vr\Gob{{\mathcal A}^{\left<n\right>}}$
is not $(n-1)\text{-allotted}$.
\end{lemma}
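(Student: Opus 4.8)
The plan is to disprove the defining relation of an $(n-1)\text{-allotted}$ variety directly on the superalgebra ${\mathcal A}^{\left<n\right>}$, using the correspondence between multilinear identities of the Grassmann envelope and superidentities of ${\mathcal A}^{\left<n\right>}$ recalled above. By Lemma~\ref{lm vGobAN} the variety $\vr\Gob{{\mathcal A}^{\left<n\right>}}$ lies in $\rl{n}$, so it makes sense to ask whether it is $(n-1)\text{-allotted}$, i.e.\ whether its free algebra satisfies relation~\eqref{rel n-allot} with parameter $n-1$, namely $\varphi\left(x_1,\dots,x_n\right)\approx0$. Since $f\approx0$ means, by the definition of $\approx$ together with Lemma~\ref{lm approx}, that almost all operators annihilate $f$, it suffices to exhibit, for every even block $\oprR\left(j_1,\dots,j_{2m}\right)$, a homogeneous evaluation in ${\mathcal A}^{\left<n\right>}$ at which the superization of $\varphi\,\oprR\left(j_1,\dots,j_{2m}\right)$ does not vanish.

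First I would replace commutators by supercommutators. On homogeneous arguments each ordinary commutator of $\Gob{{\mathcal A}^{\left<n\right>}}$ collapses, after the Grassmann factors are pulled out, to a supercommutator ${\left[\,\cdot\,,\,\cdot\,\right]}_{\mathrm s}$ of ${\mathcal A}^{\left<n\right>}$, while a product of homogeneous arguments stays a product. With the substitutions specified below the superization of $\varphi$ thus becomes an iterated supercommutator built from the odd generator $x$, and the appended block $\oprR\left(j_1,\dots,j_{2m}\right)$ turns into right multiplication by $x$ performed $2m$ times. By the rule $a_{i,j}\cdot x=a_{i,\,j+1}$ this block sends $a_{i,j}\mapsto a_{i,\,j+2m}$, so it only raises the second index and never annihilates a basis element of ${\mathcal A}^{\left<n\right>}$.

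It then remains to compute the iterated supercommutator, which I would do by the parity of $n$, writing $k=\slch{n}{2}$. For odd $n=2k+1$ the parameter $n-1=2k$ gives a pure Lie word; substituting $x$ for every argument and using ${\left[x,x\right]}_{\mathrm s}=2x^2=a_{0,0}$ (here $\varepsilon=1$) followed by $2k-1$ further supercommutators, relations~\eqref{soot supercom} and~\eqref{soot supercom2} reduce the value to $a_{2k-1,\,0}$. For even $n=2k$ one has $\varepsilon=0$, so $x^2=0$ and the seed must be furnished otherwise: the parameter $n-1=2k-1$ begins with a genuine product, so I would put $x_1:=a_{0,0}$ and $x$ for the remaining arguments, obtaining the odd seed $a_{0,0}x=a_{0,1}$; the $2k-2$ supercommutators that follow, evaluated by $k-1$ applications of~\eqref{soot supercom2}, reduce the value to $a_{2k-2,\,1}$. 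In both cases the first index is strictly below $2k$, so the element is nonzero in the quotient ${\mathcal A}^{\left<n\right>}$, and the appended block only replaces it by $a_{2k-1,\,2m}$, respectively $a_{2k-2,\,1+2m}$, which is again nonzero.

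Hence for every $m$ the superization of $\varphi\,\oprR\left(j_1,\dots,j_{2m}\right)$ admits a nonzero evaluation on ${\mathcal A}^{\left<n\right>}$, so $\Gob{{\mathcal A}^{\left<n\right>}}$ satisfies no relation of the form $\varphi\approx0$ of $(n-1)\text{-allotted}$ type, and therefore $\vr\Gob{{\mathcal A}^{\left<n\right>}}$ is not $(n-1)\text{-allotted}$. The step I expect to be the main obstacle is the superization bookkeeping: verifying that every commutator of $\varphi$ genuinely collapses to the supercommutator to which~\eqref{soot supercom} and~\eqref{soot supercom2} apply---with the correct seed, $a_{0,0}$ via ${\left[x,x\right]}_{\mathrm s}$ for odd $n$ and $a_{0,1}$ via the product $a_{0,0}x$ for even $n$---and checking that metability and the Grassmann signs produce no cancellation of the surviving tensor.
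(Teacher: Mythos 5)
Your proposal is correct and follows essentially the same route as the paper: evaluate the superization of $\varphi$ on ${\mathcal A}^{\left<n\right>}$ with the seed $a_{0,0}={\left[x,x\right]}_{\mathrm s}$ for odd $n$ (resp.\ $a_{0,1}=a_{0,0}x$ for even $n$), iterate~\eqref{soot supercom} and~\eqref{soot supercom2} to reach $a_{2k-1,\,\cdot}$ (resp.\ $a_{2k-2,\,\cdot}$), and observe that the appended $R$-block only raises the second index, so the result survives in the quotient by ${\mathcal I}^{(k)}$. The superization bookkeeping you flag as a possible obstacle is handled implicitly in the paper exactly as you describe, so no further work is needed.
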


\begin{proof}
In the case
$n=2k+1$, we need to verify that
$$
\underbrace{\bigl[\ldots\bigl[[}_{2k}{\mathcal A}^{\left<n\right>},{x]}_{\mathrm s},x\bigr]_{\mathrm s},\ldots,x\bigr]_{\mathrm s}R^j_x\neq0,
\quad j\in\mathbb N.
$$
Applying~\eqref{soot supercom}, we calculate
$$
\underbrace{\bigl[\ldots\bigl[[}_{2k}x,{x]}_{\mathrm s},x\bigr]_{\mathrm s},\ldots,x\bigr]_{\mathrm s}R^j_x
=\underbrace{\bigl[\ldots\bigl[[}_{2k-1}a_{0,0},{x]}_{\mathrm s},x\bigr]_{\mathrm s},\ldots,x\bigr]_{\mathrm s}R^j_x
=a_{2k-1,j}\neq0.
$$

For
$n=2k$,
we need to check that
$$
\underbrace{\bigl[\ldots\bigl[[}_{2k-2}\left({\mathcal A}^{\left<n\right>}\right)^2,{x]}_{\mathrm s},x\bigr]_{\mathrm s},\ldots,x\bigr]_{\mathrm s}R^j_x\neq0,
\quad j\in\mathbb N.
$$
Using~\eqref{soot supercom2}, we get
$$
\underbrace{\bigl[\ldots\bigl[[}_{2k-2}a_{0,1},{x]}_{\mathrm s},x\bigr]_{\mathrm s},\ldots,x\bigr]_{\mathrm s}R^j_x=a_{2k-2,\,j+1}\neq0.
$$
\end{proof}

\smallskip

In view of~\eqref{eq reduction to 2n}, Lemma~\ref{lm vGobAN} implies that the variety
$\vr\Gob{{\mathcal A}^{\left<n\right>}}$ is
$n\text{-allotted}$.
Consequently by Lemma~\ref{lm vGobAN-1}, we have
$\mathrm{r_t}\bigl(\rl{n}\bigr)\geqslant n$
for $n=2,\dots,s$.
Finally, comparing this estimate with the result of Sec.~\ref{Sec:UpperBoundForTheTopologicalRank},
we obtain
$\mathrm{r_t}\bigl(\rl{s}\bigr)=s$.

\section*{Acknowledgments}%
This article was carried out at the Department of Mathematics and Statistics (IME) of the University of S\~ao Paulo
as a part of the author's post-doc project supported by the S\~ao Paulo Research Foundation (FAPESP), grant 2010/51880--2.
The author is very thankful to the IME for the kind hospitality and the creative atmosphere,
to his supervisor Prof. I.~P.~Shestakov for his attention to this article,
and to Prof. S.~V.~Pchelintsev
for suggesting the problem and for the useful discussions on the obtained results.

\end{document}